\documentclass[11pt,reqno]{amsart}
\usepackage[T1]{fontenc}
\usepackage{lmodern}
\usepackage[margin=1.05in]{geometry}
\usepackage{amssymb,mathtools,microtype,needspace}
\usepackage[hidelinks]{hyperref}
\newtheorem{theorem}{Theorem}[section]
\newtheorem{lemma}[theorem]{Lemma}

\newtheorem{corollary}[theorem]{Corollary}
\newtheorem{conjecture}[theorem]{Conjecture}
\DeclareMathOperator{\Var}{Var}
\DeclareMathOperator{\Cov}{Cov}
\newcommand{\E}{\mathbb{E}}
\newcommand{\sqcupdot}{\mathbin{\dot\cup}}
\title{Optimally large cuts in both directions}
\author{Frederic Folz}
\address{Altifina Discovery Lab, 66802 Überherrn, Germany}
\email{research@altifina.com}

\author{Mykhaylo Tyomkyn}
\address{Department of Applied Mathematics, Faculty of Mathematics and Physics, Charles University}
\email{tyomkyn@kam.mff.cuni.cz}
\thanks{The second author is supported by GA\v{C}R grant 25-17377S}

\begin{document}
\begin{abstract}
We prove that for every integer $d\ge2$ and all sufficiently
large $m$, every digraph with $m$ edges and minimum outdegree
at least $d$ has a vertex bipartition for which each directed
cut is of size at least
\[\left\lceil\frac{d-1}{2(2d-1)}m+\frac{d^2}{2(2d-1)}\right\rceil.\]
The bound is sharp for infinitely many $m$.
This answers for large graphs a question of Scott
and proves a conjecture of Lee, Loh, and Sudakov. For $d=2$ we prove that $\lceil m/6\rceil$ is achievable for all $m$.
We also establish a stability result characterizing near-extremal graphs.

The proofs were obtained by LLMs, with human input. They have been revised by the authors and verified in Lean. The presentation is human-written.   

\end{abstract}
\maketitle

\section{Introduction}

A folklore fact in graph theory is that every finite undirected graph $G=(V,E)$ has a vertex partition $V=A\sqcupdot B$ such that $e(A,B)\geq |E|/2$. For example, one can assign each vertex independently with probability $1/2$ to either part, and take the expectation of the random variable $e(A,B)$. The same argument for directed graphs gives a partition in which the directed cut $e(A,B)$ is of size at least $|E|/4$.   

In a similar spirit, one can ask for many edges to cross in each
direction, but the example of an out-directed large star shows that further conditions on the graph must be imposed. In this regard, Scott~\cite{Scott} asked to find the largest constant $c=c(d)$ such that   
every digraph with $m$ edges and minimum outdegree at least $d$ admits a vertex bipartition in which both directed cuts are of size at least $cm$. 

Lee, Loh, and Sudakov~\cite{LLS} conjectured that, for every
fixed $d\ge2$, every digraph with $m$ edges and minimum
outdegree at least $d$ admits a vertex bipartition in which
both directed cuts have size at least $(c_d-o(1))m$ as
$m\to\infty$, where $c_d=(d-1)/(2(2d-1))$ (note that this is below $1/4$). 
To support their conjecture, they gave a construction showing that this coefficient cannot be
increased. Let us recall their construction, introducing a small but crucial improvement. 

A tournament is a digraph containing exactly one of $uv$ and
$vu$ for each pair of distinct vertices; it is regular if all outdegrees
are equal. For an integer $k\ge1$, take $k$ vertex-disjoint regular
tournaments of order $2d-1$ and one further vertex-disjoint regular
tournament of order $2d+1$. Choose a vertex $z$ in the latter tournament,
and add an edge to $z$ from each vertex of each smaller tournament. Moreover, for every edge $zu$ in the large tournament add the edge $uz$, for a total of $d$ extra edges (this is the aforementioned improvement -- we emphasize that it is not merely a trivial point overlooked in previous papers, since adding new edges also increases $m$, affecting the additive term adversely).
The resulting digraph $G$ has minimum outdegree at least $d$ and $m=kd(2d-1)+2d(d+1)$ edges. 
In every vertex bipartition $V(G)=A\sqcupdot B$, with $z\in A$, the cut $e(A,B)$ will be of size at most 
$$k\frac {d (d-1)}2+\frac{d(d+1)}2=c_dm+\frac{d(d+1)}{2(2d-1)}=\left\lceil c_dm+\frac{d^2}{2(2d-1)}\right\rceil.$$

The authors of~\cite{LLS} proved their conjecture for $d=2,3$,
and Liu and Yu~\cite{LY} subsequently proved it for $d=4$.
Related bounds under additional degree or structural assumptions were
obtained in~\cite{HW,HMYZ,HLW,WH}. Further related work includes results on bisections~\cite{HWY,LMZ, MWZ}. 

Our main goal in this paper is to prove an optimal bound matching the above construction. 
\begin{theorem}\label{thm:exact}
	Let $d\geq 2$ be a fixed integer. Then for every sufficiently large $m$ every digraph $G$ with $m$ edges and minimum outdegree at least $d$ admits a vertex partition $V(G)=A\sqcupdot B$ with
	\[
	\min\{e_G(A,B),e_G(B,A)\}\ge \left \lceil \frac{d-1}{2(2d-1)}m+\frac{d^2}{2(2d-1)}\right\rceil. 
	\]
\end{theorem}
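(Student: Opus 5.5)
The plan is to find a partition that is random but correlated, exploiting the outdegree condition. Each vertex $v$ will be weighted by its outdegree $d^+(v)\ge d$. For an edge $uv$ and a partition $(A,B)$, write $X_{uv}=1$ if $u\in A,v\in B$. The key tool is a randomized procedure mixing independent uniform assignment with "follow an out-neighbour" correlations, as in Lee–Loh–Sudakov. Concretely, every vertex independently picks one of its out-neighbours $f(v)$ uniformly at random. The functional digraph $v\mapsto f(v)$ consists of components, each a cycle with trees hanging off it. We then colour so that $v$ gets the \emph{opposite} side of $f(v)$ with probability $p$ and the same side otherwise, choosing the colours along the tree structure; odd cycles get a uniformly chosen fix. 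With $p$ tuned, the expectation of $e(A,B)$ equals $e(B,A)$ by the symmetry $A\leftrightarrow B$. The heart of the computation is a lower bound
\[
\E[e(A,B)]\ \ge\ \tfrac14 m+\sum_v \phi(d^+(v)) - (\text{cycle corrections}),
\]
and I expect that optimizing $p$ gives exactly $c_d m$, with equality forced only by locally tournament-like structure.

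Since we need \emph{both} cuts large simultaneously, and not just in expectation, the second step is concentration plus a deterministic correction. I would compute $\Var(e(A,B)-e(B,A))$ and use the dependency structure of the functional digraph. Edges whose endpoints lie in distinct components are independent, so the variance is controlled by $\sum_v d^-(v)^2$ restricted to components. This splits the proof into two cases. In the first case no vertex has huge indegree, say all indegrees are $o(m)$. Then $e(A,B)$ and $e(B,A)$ are both concentrated around $c'm$ with $c'>c_d$ strictly, unless $G$ is close to the extremal structure. Here one gets linear slack, which absorbs the additive constant $d^2/(2(2d-1))$ and the ceiling. In the second case some vertex $z$ has indegree $\Omega(m)$, as in the construction. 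Then we fix the side of $z$ and run the argument on $G-z$. The edges into $z$ contribute a deterministic amount, and the count of edges leaving $z$ matters, which is exactly where the construction's extra $d$ edges enter.

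The main obstacle is obtaining the exact additive term rather than $c_dm-o(m)$. For this I would prove a stability statement: if the best partition achieves less than $c_dm+\varepsilon m$, then after deleting $O(\varepsilon m)$ edges $G$ consists of many vertex-disjoint near-regular tournaments on $2d-1$ vertices, each sending all its vertices' remaining out-edges to a bounded set of hub vertices. Given this structure, I would set up an exact weighted count. For each small tournament $T$ we can choose its internal bipartition, and each one contributes at least $d(d-1)/2$ to both cuts when the hub lies on one side. The optimum is then a linear-programming/discrepancy problem over the bounded number of hubs and the leftover "defect" edges. I would show that every defect (a non-regular tournament, an extra edge, a vertex of outdegree $>d$) gains strictly more in the lower bound than it costs in $m$. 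Finally, an exhaustive check over the bounded hub configuration shows the minimum surplus is $d^2/(2(2d-1))$, attained precisely by the construction with $d$ back-edges. This last case analysis is the step I expect to be hardest and most error-prone, since the additive constant is sensitive to every edge in the hub neighbourhood.
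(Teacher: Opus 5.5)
Your three-stage architecture (asymptotic bound, then stability, then an exact clean-up) matches the paper's. However, none of the stages is actually carried out, and the first and last would not work as described.

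\textbf{Asymptotic stage.} You claim that a symmetric ``follow a random out-neighbour'' colouring with tuned $p$ gives $c_dm$. That claim is the entire content of the Lee--Loh--Sudakov conjecture, previously known only for $d\le4$, and you supply no computation. Worse, equal expectations are worthless without concentration. Concentration fails as soon as some vertex has total degree $\Omega(m)$. Large outdegree is as harmful as large indegree, and there can be several such vertices with imbalances $d^-(x)-d^+(x)$ of both signs. Your split into ``all indegrees $o(m)$'' versus ``one $z$ of indegree $\Omega(m)$, fix its side and recurse on $G-z$'' does not cover this. Moreover, fixing $z$ destroys the $A\leftrightarrow B$ symmetry you rely on.

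The paper needs genuinely new ingredients here:
\begin{itemize}
\item it fixes the partition of the whole high-degree set $X$ deterministically via the signed-sum lemma (Lemma~\ref{lem:signs});
\item it shows that if the balanced choice fails, there are three vertices whose indegree exceeds their outdegree by $\theta$ (Lemma~\ref{lem:few});
\item it then uses a \emph{biased} star-correlated assignment of $Y$ with $p=(d-1)/(2d-1)$, averaged over two complementary partitions of $X\setminus\{z\}$ (Lemmas~\ref{lem:paired} and~\ref{lem:average}).
\end{itemize}

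\textbf{Stability stage.} Your stability statement is only asserted. Extracting a \emph{single} hub $z$ is the work of Claims~1--4 in the proof of Theorem~\ref{thm:stability}.

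\textbf{Exact stage.} This has a more basic problem: an ``exhaustive check over the bounded hub configuration'' does not exist. After stability, the defect part has $o(m)$ edges, not $O(1)$, and arbitrary structure. Per-defect accounting (``each defect gains more than it costs'') is also unjustified, since defects interact. Your expectation that the surplus is attained \emph{precisely} by the $(2d+1)$-tournament core is also wrong: the paper reports admissible cores of orders $2d$ and $14d-8$.

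The missing idea is an asymmetry that makes any case analysis unnecessary. Put $z\in A$ and split each clean tournament into $d$ vertices in $A$ and $d-1$ in $B$. Each clean tournament then contributes $d(d-1)/2$ edges in each direction, \emph{plus} $d-1$ edges from $B$ into $z$. Thus $e(B,A)$ has slack $k(d-1)=\Theta(m)$, and only the one-sided cut $e(A,B)$ needs the exact additive constant.

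That constant is supplied by Lemma~\ref{lem:rooted-cut}. For any digraph with minimum outdegree at least $d$ and any vertex $z$, there is a partition with $z\in A$ and $e(A,B)\ge c_dm+d\,d^+(z)/(2(2d-1))$. The proof is a uniform star-correlated assignment of $G-z$, using that simple components of $G-z$ have at least $2d-1$ vertices.

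Apply this to the core $R$ induced by $z$, the isolated vertices, and the tournaments touched by deleted edges. $R$ keeps minimum outdegree $d$, because an edge from $R$ to an untouched tournament would itself be a deleted edge. Using $d_R^+(z)\ge d$, the lemma gives exactly $c_d|E(R)|+d^2/(2(2d-1))$. So the additive term comes from the outdegree of $z$, not from a delicate analysis of hub configurations.
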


Thus, for each fixed $d$, the construction and Theorem~\ref{thm:exact} determine the value in question exactly for all sufficiently large $m$ with $d\mid m$ and $\frac{m}{d}\equiv3\pmod{2d-1}$. 

Our proof follows a familiar pattern in extremal graph theory: we first prove an asymptotic result, convert it into a stability result, and use the latter to establish the exact result. Our asymptotic bound, in particular, proves the Lee--Loh--Sudakov conjecture for every $d$. 
\begin{theorem}\label{thm:asympt}
Let $G$ be a digraph with $m$ edges and minimum outdegree at
least $d\geq 2$. Then its vertex set has a partition $V(G)=A\sqcupdot B$ for
which the numbers $e_G(A,B)$ and $e_G(B,A)$ of edges from $A$ to $B$
and from $B$ to $A$, respectively, satisfy
\[
\begin{aligned}
e_G(A,B)&\ge \frac{d-1}{2(2d-1)}m-\frac52m^{6/7},\\
e_G(B,A)&\ge \frac{d-1}{2(2d-1)}m-\frac52m^{6/7}.
\end{aligned}
\]
\end{theorem}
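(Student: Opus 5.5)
We prove Theorem~\ref{thm:asympt} by a random partition with a carefully chosen, non-uniform bias, followed by concentration. Since both cuts must be large simultaneously, we will aim to produce a random partition in which $\E[e(A,B)]$ and $\E[e(B,A)]$ are each at least $c_dm$ minus a small error, and each is concentrated. Since $\min(X,Y)\ge \E\min$ is not directly controlled by the expectations, concentration is needed. One could instead apply the argument to a symmetric distribution: swapping $A$ and $B$ exchanges the two cuts. In either case we need $\E[e(A,B)]\ge c_dm-o(m)$ together with a variance bound.

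The first step is a reduction to bounded outdegree. Every vertex keeps exactly $d$ out-edges, chosen arbitrarily, and the remaining edges are handled separately. Edges into high in-degree vertices are the obstruction, as in the star example. The weighting idea is as follows. Put vertex $v$ into $A$ with probability $p_v$ chosen according to its in-degree and out-degree, in the spirit of the extremal construction. There the hub $z$, which has huge in-degree, effectively sits on one side, while tournament vertices are split half–half. An edge $uv$ contributes $p_u(1-p_v)$ to $e(A,B)$ and $(1-p_u)p_v$ to $e(B,A)$.

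The cleaner route is a two-stage scheme. Let $H$ be the set of vertices of in-degree at least $m^{\alpha}$, with $\alpha$ tuned to produce the exponent $6/7$. There are at most $m^{1-\alpha}$ such vertices. Place $H$ entirely in $A$, or split it by a deterministic optimization, and assign the low in-degree vertices independently and uniformly. An edge with both ends outside $H$ then contributes $1/4$ to each cut in expectation. An edge from $L=V\setminus H$ into $H$ contributes $1/2$ to $e(B,A)$ and nothing to $e(A,B)$. The key counting lemma is the following. Let $m_1$ be the number of edges inside $L$, let $m_2=e(L,H)$, and let $m_3$ count the edges starting in $H$. Because every vertex of $L$ has outdegree at least $d$ and the edges inside $L$ have small in-degree structure, we will show
\[
\tfrac14 m_1+\text{(gain from edges out of }H) \ge c_d m - O(m^{1-\alpha}).
\]
Its extremal case is exactly the construction: a component of $L$ sending one edge per vertex to $H$ has at least $d-1$ internal out-edges per vertex. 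To make this work, we will pre-adjust for vertices whose out-edges go mostly to $H$. For such a vertex we will instead bias its placement toward $B$ with probability $q$, where $q$ is optimized. The ratio $(d-1)/(2(2d-1))$ should emerge from balancing a vertex with $j$ edges to $H$ and $d-j$ edges inside $L$, and then optimizing a linear program over $j$ and $q$.

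Finally, we control the variance. Each vertex of $L$ has in-degree below $m^{\alpha}$, so changing one vertex alters the cut by at most $O(m^{\alpha}+\text{outdeg})$. We therefore apply Chebyshev or McDiarmid after splitting off the few vertices of large outdegree, which can also be placed deterministically. This gives a deviation of $O(m^{(1+\alpha)/2})$. Balancing $m^{1-\alpha}$ against $m^{(1+\alpha)/2}$, together with the secondary loss from the high-outdegree vertices, yields an exponent such as $6/7$; the constant $5/2$ comes from bookkeeping. The main obstacle is the local LP/counting lemma, which certifies that the optimal bias gives each vertex a charge of at least $c_d$ times its share of the edges. We would first attempt it by discharging: each vertex of $L$ owns its $d$ chosen out-edges, and excess edges are charged to their heads.
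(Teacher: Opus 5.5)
Your scheme has a genuine gap, located exactly at the extremal example. You place the high in-degree vertices deterministically and every other vertex independently, with a bias that depends only on its degree data. In the Lee--Loh--Sudakov construction ($k$ regular tournaments of order $2d-1$, each of whose vertices sends an edge to $z$), $z$ is the only vertex of large in-degree. All tournament vertices have the same data (outdegree $d$, one edge of which goes to $z$, and indegree $d-1$), so your rule sends them all to $B$ with a common probability $q$.

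Suppose $z\in A$. Then every tournament edge goes from $A$ to $B$ with probability $q(1-q)\le 1/4$, and edges into $z$ never count, so
\[
\E\bigl(e(A,B)\bigr)\le \frac{(2d-1)(d-1)}{4}\,k+O(d^2),
\qquad\text{while}\qquad c_dm\ge\frac{d(d-1)}{2}\,k .
\]
The deficit $\frac{d-1}{4}k$ is linear in $m$, and the case $z\in B$ is symmetric with $e(B,A)$. Consequences:
\begin{enumerate}
\item Your key counting lemma $\frac14m_1+(\text{gain})\ge c_dm-O(m^{1-\alpha})$ is false here: $\frac14m_1=\frac{(2d-1)(d-1)}4k+O(d^2)$, and the gain from edges leaving $H$ is $O(d^2)$.
\item No choice of $q$ in your local LP over $j$ and $q$ helps.
\item A first-moment-plus-concentration argument cannot certify a good outcome.
\end{enumerate}
The value $c_d$ is attained in the construction only because each tournament is split exactly $d$ versus $d-1$. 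That is a correlated choice, which no product distribution treating the tournament vertices symmetrically can imitate. Separately, your preliminary ``reduction'' to outdegree exactly $d$ reduces nothing: the target is $c_dm$ for the original $m$, so the discarded edges must still be cut at rate $c_d$.

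The missing idea is a dependent randomisation of the low-degree part. The paper partitions $G'[Y]$ into induced stars and singletons by depth-first search (Erd\H{o}s--Gy\'arf\'as--Kohayakawa). This puts at least $L(G'[Y])=(|Y|-\tau)/2$ edges inside stars, where $\tau$ counts simple components; this $L$ is not your set $L$. Each star is then assigned as a unit: all vertices to $A$ with probability $1-2p$, centre to $A$ and leaves to $B$ with probability $p$, and the reverse with probability $p$. Every vertex is still in $B$ with probability $p$. However, each star edge is cut in each direction with probability $p$ rather than $p(1-p)$, giving the extra term $p^2L$ in $I_{AB}$ and $I_{BA}$ (Lemma~\ref{lem:completion}). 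On the extremal example with $p=1/2$, this supplies exactly the missing $\frac{d-1}{4}$ per tournament.

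Even then, placing the high-degree set $X$ is not a routine optimisation. The paper needs Lemma~\ref{lem:degree}, $e(Y,X\setminus S)\ge(d-|S|)\tau-L$, which turns minimum outdegree into edges from simple components towards $X$. It also needs a case split on the minimal signed sum $\theta$ of the $a_x=d^-(x)-d^+(x)$. If $\theta\le m'/(2d-1)+L$, the most balanced split with $p=1/2$ works. Otherwise at least three vertices have $a_x\ge\theta$. Take $z$ of minimum indegree among three of them; then one of two complementary splits of $X\setminus\{z\}$ works with $p=(d-1)/(2d-1)$.
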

\begin{theorem}\label{thm:stability}
	Let $d\ge2$ be a fixed integer, and let $(G_i)_{i\ge1}$ be a
	sequence of digraphs with minimum outdegree at least $d$.
	For each $i\ge1$, put $m_i=|E(G_i)|$.
	Suppose that $m_i\to\infty$ and
	\[
	\max_{V(G_i)=A\sqcupdot B}
	\min\{e_{G_i}(A,B),e_{G_i}(B,A)\}
	=c_dm_i+o(m_i).
	\]
	Then there exist spanning subdigraphs $H_i\subseteq G_i$ with $|E(G_i)\setminus E(H_i)|=o(m_i)$, and such that
each $H_i$ consists of vertex-disjoint regular tournaments of
		order $2d-1$, a single vertex $z_i$ outside these tournaments receiving
		an edge from every tournament vertex, and $o(m_i)$ isolated
		vertices. %All asymptotic statements refer to $i\to\infty$.
\end{theorem}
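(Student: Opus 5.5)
The plan is to reopen the proof of Theorem~\ref{thm:asympt} and track where each inequality loses. The proof is presumably a randomized argument. One identifies a set $Z$ of high in-degree ("sink-like") vertices, places $Z$ deterministically, and assigns the remaining vertices randomly, possibly with correlations inside a local structure. The guarantee $c_d m$ then comes from a chain of inequalities. Each inequality is tight only under a specific structural condition. The stability statement says that if the optimum is $c_d m_i+o(m_i)$, then every inequality in the chain is tight up to $o(m_i)$. We then decode each approximate equality into structure.

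\textbf{Step 1 (reduction to a minimal subgraph).} First pass to a spanning subdigraph $G_i'$ in which every vertex has outdegree exactly $d$, by deleting surplus out-edges. Deleting edges can only decrease cuts, and Theorem~\ref{thm:asympt} applied to $G_i'$ gives cuts at least $c_d|E(G_i')|-o(m_i)$. Since the optimum in $G_i$ is $c_dm_i+o(m_i)$, we get
\[
c_d m_i \ge c_d|E(G_i')| + (\text{gain from the deleted edges}) - o(m_i).
\]
I would show that a random partition of $G_i'$ that is nearly optimal in expectation, or a small perturbation of it, also cuts a constant fraction of the deleted edges in each direction. This forces $|E(G_i)\setminus E(G_i')|=o(m_i)$. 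From here on I assume $G_i$ is $d$-out-regular, so $m_i=dn_i$ up to $o(m_i)$.

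\textbf{Step 2 (local counting).} In an out-regular digraph, random bipartitions with each vertex placed with probability $1/2$ give each directed cut expectation $m/4$. The deficit down to $c_d m$ must come from a large sink set $Z$: placing $Z$ on one side kills one direction for edges into $Z$. Let $Z$ be the vertices of in-degree at least $\varepsilon m$. I would compare, edge by edge, the contribution to each cut under an optimized partition. Vertices sending all $d$ out-edges into $Z$ are essentially lost, while edges among the non-$Z$ vertices are cut at rate $1/4$ in each direction. Optimizing the two-sided trade-off should yield an inequality of the form
\[
\min\text{cut} \ge c_d m + \delta\cdot(\text{defect}),
\]
where the defect measures how far the graph is from the extremal configuration. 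Concretely, the defect consists of the number of vertices not in a component of $G-Z$ having exactly $2d-1$ vertices and exactly $d(d-1)$ edges, plus the out-degree into $Z$ exceeding one. This is where the ratio $(d-1)/(2(2d-1))$ arises. A component $C$ of $G-Z$ with $c$ vertices, each sending one edge to $Z$, has internal edges at most $\binom{c}{2}$ and at least $c(d-1)$, which gives $c\ge 2d-1$. The efficiency $\frac{\text{internal edges}/4}{\text{total edges}}$ is minimized exactly when $c=2d-1$ and $C$ is a tournament.

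\textbf{Step 3 (decoding).} Tightness up to $o(m_i)$ then forces the following: all but $o(n_i)$ vertices lie in components of $G-Z$ that are tournaments on $2d-1$ vertices. Such a tournament with all internal outdegrees $d-1$ is regular. Each of its vertices sends exactly one edge to $Z$. The remaining task is to show $|Z|=1$ up to $o(m_i)$ edges. If two sinks $z_1,z_2$ each receive $\Omega(m)$ edges, placing $z_1\in A$ and $z_2\in B$ balances the losses and beats $c_dm$ by a linear amount. So all but $o(m_i)$ edges into $Z$ go to one vertex $z_i$. Deleting the $o(m_i)$ exceptional edges gives $H_i$, and the vertices left isolated number $o(m_i)$.

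\textbf{Main obstacle.} The hard part is Step 2: proving a \emph{robust} version of the asymptotic inequality, with a linear gain in the defect, rather than only the $-\frac52 m^{6/7}$ error. I would try to first reprove Theorem~\ref{thm:asympt} via a weighted local LP or discharging argument on components of $G-Z$. In that argument tournaments of order $2d-1$ are the unique tight case, and compactness of finitely many local types gives a uniform slack $\delta>0$.
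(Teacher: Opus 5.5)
Your overall philosophy (track the slack in the proof of Theorem~\ref{thm:asympt} and decode it into structure) matches the paper's. The proposal nevertheless has a genuine gap. Step~2 is the entire theorem in robust form, it is asserted rather than proved, and the heuristic you give for it is incorrect.

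With $Z$ on one side and the other vertices assigned independently (biased or not), an edge inside $G-Z$ crosses from $A$ to $B$ with probability $p(1-p)\le 1/4$. Take a simple component $C$ of $G-Z$ in which every vertex has $d-1$ internal out-edges and one edge to $Z$. Your efficiency is then $\frac{|C|(d-1)/4}{|C|d}=\frac{d-1}{4d}$ for \emph{every} order $|C|$, which is strictly below $c_d=\frac{d-1}{2(2d-1)}$. So this accounting does not reach $c_dm$ even on the extremal graph, and it does not single out $|C|=2d-1$.

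The missing idea is the correlated assignment along a partition into induced stars (Lemma~\ref{lem:connected-stars}, Lemma~\ref{lem:completion}). It adds $p^2L$, where $L$ collects $(|C|-1)/2$ from each simple component and $|C|/2$ from each non-simple one. At $p=1/2$ the component above then contributes $\frac{|C|(d-1)}{4}+\frac{|C|-1}{8}$ to $e(A,B)$. This exceeds $c_d\cdot d|C|$ by exactly $\frac{|C|-(2d-1)}{8(2d-1)}$, so it is the loss of $1/2$ per simple component that makes order $2d-1$ extremal.

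Your bound $e(C)\le\binom{|C|}{2}$ also fails for non-simple components: a bidirected $K_d$ fits your degree pattern with $|C|=d<2d-1$, and such components are controlled only by the star gain. Nor is there a compactness argument over ``finitely many local types'', since components have unbounded order. The paper instead proves the explicit bound $\rho(C)\ge |C|/(4d)$ for every component with positive slack $\rho(C)$.

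Second, stability does not follow from declaring every inequality of the asymptotic argument tight. That argument has tight configurations other than the extremal one, and you give no mechanism for producing the extra partitions needed to rule them out. In the paper, singleton components of $G'[Y]$ whose vertex sends $d-1$ edges to $X\setminus\{z\}$ and one to $z$ have zero slack. Your defect rightly counts them, but they are eliminated only by a new explicit partition (Claim~4: $z$, $S$, $W$ in $A$, $X\setminus\{z\}$ in $B$, each tournament split $d$/$d-1$), which gains $\frac{d-1}{2(2d-1)}|S|$. Likewise, for $d\ge3$ the case $|Z^-|\ge3$ is tight at $p_0=(d-1)/(2d-1)$ and is refuted only after re-biasing to $p_1=(d-1)/(2d)$. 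The case $|Z^-|=2$, $|Z^+|=1$ is refuted through the partition $X\sqcupdot Y$.

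Your Step~1 hides exactly these difficulties. The deleted surplus includes all but $d$ out-edges of every vertex of large outdegree, and the paper obtains $\alpha=e(X,Y)=o(m)$ only at the very end. The criterion you state for Step~1 is also insufficient. Since the optimum of $G_i$ is compared with $c_dm_i=c_d|E(G_i')|+c_d|D|$, you need a near-optimal partition of $G_i'$ that raises the \emph{minimum} cut by $(c_d+\delta)|D|$, not one that cuts ``a constant fraction'' of $D$. Such a partition need not arise from near-optimal partitions of $G_i'$: deleted edges pointing into a vertex kept in $A$ never contribute to $e(A,B)$. So the choice of which edges to delete, and the slack of $G_i'$ itself, would have to enter the argument.
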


For $d=2$ we resolve Scott's question in full. The proof of the following theorem is given in Section~\ref{sec:exact} and can be read independently.

\begin{theorem}\label{thm:2out}
Let $G$ be a digraph with $m$ edges and minimum outdegree at least $2$. Then there is a partition $V(G)=A\sqcupdot B$ such that
\[
e_G(A,B)\ge\left\lceil\frac m6\right\rceil
\qquad\text{and}\qquad
e_G(B,A)\ge\left\lceil\frac m6\right\rceil.
\]
\end{theorem}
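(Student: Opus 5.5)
We reduce to a minimal counterexample and then combine a weighted random partition with local modifications. Take a counterexample $G$ with $m$ minimal, and within that with $|V(G)|$ minimal. First we clean up the structure. If some vertex has outdegree at least $3$, we delete one of its out-edges $uv$, chosen so that $v$ keeps positive indegree where possible. The resulting graph $G'$ has $m-1$ edges, and by minimality it has a partition with both cuts at least $\lceil (m-1)/6\rceil$. When $6\nmid m-1$ this value already equals $\lceil m/6\rceil$. The remaining residue classes are handled by choosing which edge to delete so that it crosses in the direction that needs it, or by deleting two or three edges at once and reinserting them. This does not reduce all the way to $2$-out-regular graphs, since deleting edges from high-outdegree vertices is constrained. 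So we instead keep the structure general and prove a stronger, weighted statement directly.

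\textbf{Main step: an averaging argument with correlated choices.} For each vertex $v$ we fix two out-neighbours $v_1,v_2$, which gives a spanning $2$-out subgraph $F$ with $2n$ edges. Every other edge is "extra". We choose a random partition from a distribution designed so that for each $v$ the pair $(v,\{v_1,v_2\})$ contributes well to both directions. For example, we pick a uniformly random injective-ish orientation of $F$ through a random linear order, or we take a random bipartition conditioned to be balanced on $F$-components. The goal is an inequality of the form
\[
\E\bigl[\min\{e(A,B),e(B,A)\}\bigr]\ \ge\ \frac{m}{6},
\]
but $\E\min$ is hard to control. So instead we exhibit a family of partitions, together with a symmetric version obtained by swapping $A$ and $B$, such that $e(A,B)+e(B,A)$ is large and a discrete intermediate-value argument applies. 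Concretely, we order the vertices and move them one at a time from $A$ to $B$. Then $e(A,B)-e(B,A)$ changes from $0$ back to $0$, with jumps bounded by the degree of the moved vertex. We look for a moment where the two cuts are close while their sum stays at least about $m/3$.

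\textbf{Using the outdegree condition.} Minimum outdegree $2$ is used as follows. Each vertex $v$ placed on a side opposite to at least one of $v_1,v_2$ contributes an out-edge crossing from $v$'s side. A partition where every vertex has an out-neighbour on the other side (a "$2$-out cut") makes $e(A,B)\ge |A|$ and $e(B,A)\ge |B|$. Such partitions exist whenever $F$ has no odd structures obstructing them, in the spirit of splitting digraphs. We then combine this with the undirected max-cut bound on the extra edges, giving roughly $(m-2n)/4$ in each direction, to get
\[
\min\{e(A,B),e(B,A)\}\ge \min\{|A|,|B|\}+\tfrac14(m-2n)-O(1),
\]
and we need this to be at least $m/6$ for $n$ in every range. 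When $m$ is close to $2n$, the structure is nearly $2$-out-regular, and we reduce to regular tournaments of order $3$ (directed triangles) and small exceptional pieces, analysed by hand exactly as in the extremal construction.

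\textbf{Main obstacle.} The hardest step will be getting the exact ceiling $\lceil m/6\rceil$ for all $m$, not just for large $m$. This requires tracking the $O(1)$ losses in the balancing argument, and we expect a finite case analysis of small or dense configurations: directed triangles hanging onto a sink vertex $z$, and $K_4$-like tournaments. In that analysis, the parity of the number of triangles and the placement of $z$ determine whether $\lceil m/6\rceil$ is attained. We would first try to push the balancing argument to a lossless statement by choosing the vertex ordering so that each move changes $e(A,B)-e(B,A)$ by at most $1$ near the crossing point, and fall back on the minimal-counterexample reductions only for the residual small cases.
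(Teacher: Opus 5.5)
Your proposal does not yet contain a proof. Each mechanism you sketch is either left unspecified or fails as stated.

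\textbf{Where the sketched steps break.} In the sweeping argument (moving vertices one at a time from $A$ to $B$), nothing controls $e(A,B)+e(B,A)$ at the moment when the difference $e(A,B)-e(B,A)=\sum_{v\in A}(d^+(v)-d^-(v))$ is near zero. The sum is zero at both ends of the sweep. Moreover, moving $v$ changes the difference by $d^-(v)-d^+(v)$, which can be of order $m$: in the extremal construction the sink $z$ has indegree about $m/2$.

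The ``$2$-out cut'', in which every vertex has an out-neighbour on the other side, need not exist even for $2$-out-regular digraphs. Match each line of the Fano plane to a distinct point on it, and direct that point to the other two points of its line. Since the Fano plane is not $2$-colourable, every bipartition has a vertex with both out-neighbours on its own side. Even where such a cut exists, you would need it balanced and simultaneously good for the extra edges in both directions, and you do not argue this.

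The reduction of the case $m\approx 2n$ to directed triangles hanging onto $z$ is also unfounded. Near $2$-out-regularity does not force that structure (the Fano digraph is $2$-out-regular). That structure comes from near-extremality, and the paper obtains it only asymptotically. Your edge-deletion induction for $m\equiv 1\pmod 6$ is circular as well, because the partition of $G-uv$ depends on which edge you delete.

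\textbf{The missing idea.} $\E\min\{e(A,B),e(B,A)\}$ can be controlled exactly, with no error term. Write $\min\{e(A,B),e(B,A)\}=\tfrac12\bigl(e(A,B)+e(B,A)\bigr)-\tfrac12|e(A,B)-e(B,A)|$. The difference depends only on the imbalances $b(v)=d^+(v)-d^-(v)$. Minimum outdegree $2$ gives $m\ge 3d^+(v)$, so if also $d^-(v)<m/3+1$ for every $v$, then $|b(v)|\le m/3$.

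Now sort the vertices with $b(v)>0$ and pair them consecutively, and do the same for those with $b(v)<0$. Split each pair uniformly at random, and place the remaining vertices independently. The expected sum of the two cuts is still at least $m/2$. The difference equals $(P+Q)/2$, where $P,Q$ are independent, symmetric, and bounded by $m/3$ in absolute value, so $\E|P+Q|=\E\max\{|P|,|Q|\}\le m/3$. Hence $\E\min\{e(A,B),e(B,A)\}\ge m/4-m/12=m/6$, and integrality gives $\lceil m/6\rceil$ without any case analysis.

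A vertex $z$ with $d^-(z)\ge m/3+1$ is handled separately. Lemma~\ref{lem:rooted-cut} gives a one-sided cut with $z\in A_0$ and $e(A_0,B_0)\ge m/6$. Shrink $A$ to $z$ together with the tails of $\lceil m/6\rceil$ chosen $A_0$-to-$B_0$ edges. This keeps $e(A,B)\ge\lceil m/6\rceil$ and forces $e(B,A)\ge d^-(z)-|A\setminus\{z\}|>m/6$.

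So the exact ceiling, which you flagged as the main obstacle, comes for free. The real work is lossless control of the imbalance, plus the separate treatment of a vertex of huge indegree, and neither appears in your plan.
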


The rest of the paper is structured as follows. In Section~\ref{sec:probab} we establish an auxiliary star-partition lemma, and the main probabilistic lemma of the proof. In Sections~\ref{sec:asympt} and~\ref{sec:stability} we prove Theorems~\ref{thm:asympt} and~\ref{thm:stability}, respectively. In Section~\ref{sec:exact} we conclude the proof of Theorem~\ref{thm:exact}, and give a stand-alone proof of Theorem~\ref{thm:2out}. We end the paper with some concluding remarks in Section~\ref{sec:conclusion}. 
\subsection*{Proof overview}
Let us describe here the proof of Theorem~\ref{thm:asympt}, which introduces the main ideas. 

In a graph where all total degrees are small, say constant, we could simply partition the vertex set uniformly at random, and applying standard concentration bounds would yield both directed cuts of size close to $m/4$ with positive probability. Where this fails, e.g. in the above extremal construction, it is due to the presence of large degree vertices. Hence, the strategy is to treat the vertices of large and small total degree (we call their sets $X$ and $Y$ respectively) separately. With appropriate parameters we may delete all edges inside $X$ at a negligible loss, so that in the resulting graph $G'$ every edge touches $Y$. The goal now is to combine a deterministic partition of $X$ with a random one of $Y$.

The next tool was first used in the undirected setting by Erd\H{o}s, Gy\'arf\'as and Kohayakawa~\cite{EGK}. Note that, since linearity of expectation does not require independence, an extra gain can be achieved by vertex-partitioning the graph into induced bipartite graphs. Then in each of them assign all vertices in one part to $A$ and in the other one to $B$ -- and decide which way to do it with probability $1/2$ independently for each induced bipartite graph. In this way the edges between the bipartite graphs still contribute $1/2$ to the expectation, while the edges within the induced bipartite graphs contribute $1$, for a total gain over the uniform assignment. One non-trivial partition into bipartite graphs that always exists is a partition into induced stars and singletons, based on the depth-first exploration of the connected components of the graph. We apply this method in the directed setting similarly to~\cite{LLS}, but with an improvement. The resulting random partition of $Y$ gives a sufficient condition on the required deterministic partition of $X$. In particular, the remaining case to consider is when there are at least $3$ vertices in $X$ whose in-degrees significantly outweigh their out-degrees.

This and further structural information allow a new pass at partitioning $Y$ randomly, but this time with a bias (this is where our proof crucially deviates from that in~\cite{LLS}, where biased assignments were made independently for individual vertices). Given an induced star partition of the underlying graph of $G'[Y]$, we assign the vertices to parts $A$ and $B$ as follows. For each star independently we assign with probability $1-2p$ (for $p\leq 1/2$) all vertices to $A$, with probability $p$ its centre to $A$ and all leaves to $B$, and with probability $p$ its centre to $B$ and all leaves to $A$. Each singleton is assigned independently to $A$ with probability $1-p$ and to $B$ with probability $p$. Note that in this assignment every vertex belongs to $B$ with probability $p$, while each edge within a star contributes $p$, as opposed to  $p(1-p)$ under independence, to each directed-cut expectation. 

It turns out that the ensuing sufficient condition on the partition of $X$ can be realized deterministically. Among three vertices whose indegrees significantly outweigh
their outdegrees, choose a vertex $z$ of minimum indegree
in $G'$. A balancing argument gives two partitions of $X$
which keep $z$ in $A$ and reverse the placement of every
other vertex. For $p=(d-1)/(2d-1)$, the expected cut from
$A$ to $B$ is the smaller one for both partitions.
Averaging these two expectations and using the
minimum-outdegree assumption shows that at least one
partition satisfies the required deterministic condition.

\vspace{2mm}
\noindent
\textbf{Notation.} All digraphs in this paper are finite and have no loops and at most one edge
for each ordered pair of distinct vertices. Both edges $uv$ and $vu$ may be
present. The underlying graph of a digraph is the simple undirected graph
in which $u$ and $v$ are adjacent if at least one of these edges is present.
A component of a digraph $G$ is the subdigraph induced by the vertex set
of a connected component of its underlying graph.
A component is called \emph{simple} if it contains no pair of opposite edges.
In particular, isolated vertices are simple components.

For a digraph $G$, write $V(G)$ for its vertex set and $E(G)$ for its edge
set. If $S,T\subseteq V(G)$ are disjoint, let $e_G(S,T)$ be the number of
edges with tail in $S$ and head in $T$, and put $e_G(S)=|E(G[S])|$, where
$G[S]$ is the induced subdigraph on $S$. We write $e_G(v,T)$ for
$e_G(\{v\},T)$. The outdegree, indegree, and total degree of $v$ are
denoted by $d_G^+(v)$, $d_G^-(v)$, and
$d_G(v)=d_G^+(v)+d_G^-(v)$, respectively. Subscripts are omitted when the
digraph is clear.

\vspace{2mm}
\noindent
\textbf{Statement on AI use.} The proof of Theorem~\ref{thm:asympt} was first discovered by the Altifina research framework, which combines multiple AI models---in this work, OpenAI's ChatGPT 6 Astra and 6.1 Sol, as well as Anthropic's Claude Opus 5.5. The proof was revised and extended by the second author, aided by ChatGPT 6 Astra, leading to the main results of this paper, Theorems~\ref{thm:exact} and~\ref{thm:2out}. The proofs have been verified in Lean, and a certificate is available upon request. The presentation is human-written and proofread with ChatGPT 6 Astra.

\section{The probabilistic lemma}\label{sec:probab}

In this section we formulate a sufficient deterministic criterion for a suitable random partition of $G$ to yield the desired large cuts with positive probability. 

An induced star in a digraph means a vertex set whose induced underlying
graph is $K_{1,r}$ for some integer $r\ge 1$. A vertex adjacent to all
the other vertices is designated as its centre; the other $r$ vertices
are its leaves. For a star on two vertices, either endpoint may be designated as its
centre.

The following lemma, establishing a partition into induced stars with many edges inside the parts, dates back to Erd\H{o}s, Gy\'arf\'as and Kohayakawa~\cite{EGK}. For our purposes we need a small improvement to the bound when $H$ contains a pair of opposite edges.

\begin{lemma}\label{lem:connected-stars}
Let $H$ be a connected digraph on $c\ge1$ vertices. Then there is a
partition $\mathcal P$ of $V(H)$ into induced stars and at most one
singleton such that
\[
\sum_{P\in\mathcal P}e_H(P)\ge
\begin{cases}
(c-1)/2,&\text{if $H$ is simple},\\
c/2,&\text{if $H$ is not simple}.
\end{cases}
\]
\end{lemma}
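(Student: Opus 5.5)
We argue by induction on $c$, peeling off one induced star at a time while keeping the rest connected. The key observation is that an induced star with $r\ge1$ leaves has at least $r\ge (r+1)/2$ edges of $H$ inside it. Equality holds only when $r=1$ and the two vertices span a single edge. A star with $r\ge2$ leaves, or any star containing a pair of opposite edges, gives at least an extra $1/2$ over $(\text{number of vertices})/2$. So if $\mathcal P$ consists of induced stars and at most one singleton, then
\[
\sum_{P\in\mathcal P} e_H(P)\ \ge\ \frac{c-s}{2},
\]
where $s\in\{0,1\}$ is the number of singletons. This already gives the simple case. For the non-simple case it suffices that $s=0$, or that some part is a star with at least two leaves or containing an opposite pair.

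\emph{Peeling step.} Fix a root $x$ and a BFS layering $L_0=\{x\},L_1,\dots,L_D$ of the underlying graph of $H$. If $D=0$ then $c=1$ and we output a singleton. Otherwise pick $v\in L_D$ and its BFS parent $u\in L_{D-1}$. Let $N=N(u)\cap L_D$. There are two cases.
\begin{itemize}
\item If $H[N]$ contains an edge $ab$, remove the 2-vertex star $\{a,b\}$.
\item Otherwise remove the induced star with centre $u$ and leaves $N$. It is induced because $N$ is independent and lies in a single layer.
\end{itemize}
In both cases the remaining digraph is connected or empty. Indeed, every vertex of $L_D$ that is not removed still has its BFS parent: in the first case $u$ survives, and in the second case its parent is not $u$. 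Every vertex of $L_j$ with $j\le D-1$, other than $u$, keeps its parent in $L_{j-1}$. Moreover, $u$ can be the root only if $D=1$, and then everything is removed. Applying induction to the remainder (which has $c'<c$ vertices) and adding the removed star yields a partition of the required type with at most one singleton. This proves the simple bound $(c-1)/2$.

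\emph{Non-simple case (main obstacle).} By the observation above, the only bad outcome is a partition in which every part is a 2-vertex star spanning a single edge, together with exactly one singleton. We must rule this out whenever $H$ contains an opposite pair $x\leftrightarrow y$. First I would root the BFS at $x$, so that $y\in L_1$. I would also run the peeling so that the pair $\{x,y\}$ is kept together: whenever the procedure would remove $y$ without $x$, we have $D=1$ at that moment and $u=x$ is the root, so $y$ in fact lies in the star centred at $x$, which then contains the opposite pair. The remaining danger is that $y$ is removed as part of an edge-pair $\{a,b\}\subseteq N$ at some level $D\ge2$. But $y\in L_1$, so $y\in N$ forces $D=1$, and then $u=x$ and $N=L_1$. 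In that last step I would modify the rule: if $y\in N$, remove the star $\{x,y\}$ first instead of an edge-pair. This may disconnect the remainder, so I would check it separately. When $D=1$, every other vertex lies in $L_1$. The alternative is to handle this terminal configuration by hand: all remaining vertices are adjacent to $x$, and we can choose a suitable star centred at $x$ containing $y$, or pair $\{x,y\}$ and match the rest. I expect this bookkeeping at the final step to be the delicate point, and I would verify that the star containing $x$ can always be chosen to contain $y$, which gives the extra $1/2$ and hence the bound $c/2$.
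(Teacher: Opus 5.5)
Your peeling step is correct and gives the simple-case bound $(c-1)/2$. It uses a BFS layering, which is why you need your first case; the paper uses a DFS tree, where siblings are never adjacent, so one fixed tree can be peeled.

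The non-simple case, however, has a genuine gap. Your claim that $y$ can be removed without $x$ only when $D=1$ and $u=x$ is false: in your second case with $D=2$, the centre $u\in L_1$ may be $y$ itself. For $V(H)=\{x,y,w\}$ with edges $xy,yx,yw$, rooting at $x$ gives $L_2=\{w\}$, $u=y$ and $N=\{w\}$. You remove $\{y,w\}$ (one edge) and are left with the singleton $x$, a total of $1<3/2$.

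More fundamentally, keeping $x$ and $y$ in a common part is impossible in general, so the terminal step you postpone cannot be completed as planned. Take the opposite pair $xy,yx$ and three pairwise nonadjacent vertices $a,b,c$, each joined by a single edge to $x$ and by a single edge to $y$. An induced star containing both $x$ and $y$ has one of them as centre. It can have no further leaf, since $a,b,c$ are adjacent to both. So that part is $\{x,y\}$, which forces $a,b,c$ to be three singletons. Your modification (remove $\{x,y\}$ first when $y\in N$) does exactly this here, whereas the unmodified procedure would have succeeded.

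The missing idea is to allow the pair to be split and to repair the count at the end by an exchange, as the paper does. Drop the modification and run your original peeling with the layering always recomputed from the fixed root $x$. Equivalently, strengthen the induction hypothesis so that the singleton, if any, is a prescribed vertex.

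Your first case removes only vertices of $L_D$ with $D\ge1$. Your second case removes $x$ only when $D=1$, in which case $N=L_1$ and everything is removed. Hence the only possible singleton is $x$.

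If there is no singleton, or the part containing $y$ has at least three vertices or contains an opposite pair, your own observation already gives $c/2$. Otherwise $y$ lies in a two-vertex star $\{y,w\}$ spanning one edge, with $w\ne x$. Replace it by $\{x,y\}$, which spans two edges, and make $w$ the singleton. The total rises by $1$, from at least $(c-1)/2$ to more than $c/2$.
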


\begin{proof}
If $H$ is simple, choose an arbitrary vertex $u$ as a root. Otherwise,
choose opposite edges $uv$ and $vu$ and take $u$ as the root.
Run depth-first search in the underlying graph of $H$ from $u$, obtaining
a rooted spanning tree $T$. Each edge of the underlying graph joins two
vertices one of which is an ancestor of the other in $T$. Indeed, if $x$
is discovered before a neighbour $y$, then $y$ is discovered before the
search finishes at $x$, and hence lies in the subtree rooted at $x$.

As long as the remaining tree has at least two vertices, choose a nonleaf
of maximum depth and remove it together with all its children as a part $P\in \mathcal P$.
All these children are leaves of the remaining tree. They are pairwise
nonadjacent in the underlying graph, since distinct children are not
ancestors of one another in $T$. Thus the removed part induces a star.
It is a pendant subtree of the remaining tree, so its removal leaves
either the empty graph or a tree containing $u$. If one vertex
remains at the end, it is $u$, and we take it as a singleton. Since each star has at least two vertices, the total number of underlying edges in the
stars is at least $(c-1)/2$. Each underlying edge corresponds to at least one edge of $H$. Thus the
stars contain at least $(c-1)/2$ edges of $H$, proving the assertion when
$H$ is simple.

Suppose that $H$ is not simple. If there is no singleton, the same count
gives at least $c/2$ edges. If there is a singleton, it must be $u$.
If, in addition, a star has at least three vertices, then, again we obtain at least $c/2$ underlying edges. 

Hence, it remains to consider the case in which $u$ is the singleton and all
stars have exactly two vertices. The underlying edges of these stars
form a matching covering $V(H)\setminus\{u\}$. In particular, $v$ belongs
to a star $\{v,w\}$. Replace this star by $\{u,v\}$, with $w$ becoming
the singleton instead of $u$. There are still $(c-1)/2$ stars on two
vertices, and the star $\{u,v\}$ contains two directed edges. Thus the stars
contain at least $(c-1)/2+1>c/2$ edges, as required.
\end{proof}

\noindent For a digraph $H$, let $\tau(H)$ be the number of its
simple components, and put
\[
L(H)=\frac{|V(H)|-\tau(H)}2.
\]
In particular, we have $0\le\tau(H)\le |V(H)|$ and $L(H)\ge0$; the number $L(H)$ need not be an integer.

\begin{corollary}\label{cor:stars}
Let $H$ be a digraph. Then there exists a partition
$\mathcal P$ of $V(H)$ into induced stars and singletons such that 
$\sum_{P\in\mathcal P}e_H(P)\ge L(H)$.
\end{corollary}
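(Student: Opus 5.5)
The plan is to apply Lemma~\ref{lem:connected-stars} separately to each component of $H$ and take the union of the resulting partitions. Let $H_1,\dots,H_k$ be the components of $H$, with $c_j=|V(H_j)|\ge1$. Each $H_j$ is a connected digraph, so Lemma~\ref{lem:connected-stars} gives a partition $\mathcal P_j$ of $V(H_j)$ into induced stars of $H_j$ and at most one singleton. This partition satisfies $\sum_{P\in\mathcal P_j}e_{H_j}(P)\ge (c_j-1)/2$ if $H_j$ is simple, and $\ge c_j/2$ otherwise. An isolated vertex is a simple component with $c_j=1$; the lemma then yields a single singleton and the bound $0$, which is consistent.

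Set $\mathcal P=\bigcup_j\mathcal P_j$. This is a partition of $V(H)$, since the $V(H_j)$ partition $V(H)$. Next we check that each part $P\in\mathcal P_j$ is still an induced star (or singleton) in $H$. Because $P\subseteq V(H_j)$ and $H_j$ is an induced subdigraph of $H$, we have $H[P]=H_j[P]$, so the induced underlying graph is unchanged. By the same identity, $e_H(P)=e_{H_j}(P)$.

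Finally, summing over $j$ gives
\[
\sum_{P\in\mathcal P}e_H(P)\ge\sum_{j\text{ simple}}\frac{c_j-1}2+\sum_{j\text{ not simple}}\frac{c_j}2=\frac{|V(H)|-\tau(H)}2=L(H).
\]
There is no real obstacle here. The only points needing care are that induced subgraphs of a component coincide with those of $H$, and that isolated vertices are counted correctly in $\tau(H)$. If $H$ is empty, the empty partition works trivially.
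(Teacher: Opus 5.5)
Your proposal is correct and follows the paper's proof exactly: apply Lemma~\ref{lem:connected-stars} to each component, take the union of the partitions, and sum the componentwise bounds to obtain $(|V(H)|-\tau(H))/2$. Your checks that $H[P]=H_j[P]$ (since components are induced) and that isolated vertices are counted as simple components in $\tau(H)$ are implicit in the paper's argument.
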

\begin{proof}
Apply Lemma~\ref{lem:connected-stars} separately to each component of $H$
and take the union of the resulting partitions. Writing $C$ for the
vertex set of a component, we obtain
\[
\begin{aligned}
	\sum_{P\in\mathcal P}e_H(P)
	&\ge\sum_{\substack{C:\,H[C]\text{ simple}}}\frac{|C|-1}{2}
	+\sum_{\substack{C:\,H[C]\text{ not simple}}}\frac{|C|}{2}\\
	&=\frac{|V(H)|-\tau(H)}2=L(H).
\end{aligned}
\]
\end{proof}

Let $G$ be a digraph with $V(G)=X\sqcupdot Y$, where $X$ is
independent and $Y\ne\emptyset$, and put $L=L(G[Y])$.
For an ordered partition $\mathcal X=(X_1,X_2)$ of $X$ and
$p\in[0,1/2]$, define
\begin{equation}\label{eq:ideals}
\begin{aligned}
I_{AB}(\mathcal X,p)
  &=p\,e(X_1,Y)+(1-p)e(Y,X_2)+p(1-p)e(Y)+p^2L,\\
I_{BA}(\mathcal X,p)
  &=(1-p)e(X_2,Y)+p\,e(Y,X_1)+p(1-p)e(Y)+p^2L.
\end{aligned}
\end{equation}
The first three terms are the expected cut sizes when $X_1$ is placed in
$A$, $X_2$ is placed in $B$, and each vertex of $Y$ is assigned
independently to $A$ with probability $1-p$ and to $B$ with probability $p$. The term $p^2L$ accounts for an improvement achieved by
assigning the vertices of each star in a suitable dependent way.

\begin{lemma}\label{lem:completion}
Let $G$ be a digraph with $m>0$ edges, and suppose $V(G)=X\sqcupdot Y$,
where $X$ is independent and $Y\ne\emptyset$. Let
$\mathcal X=(X_1,X_2)$ be an ordered partition of $X$, and let
$p\in[0,1/2]$ be a constant. Put $L=L(G[Y])$ and
$\Delta=\max_{w\in Y}d_G(w)$, and let $I_{AB}$ and $I_{BA}$ be
defined by \eqref{eq:ideals}. Then there is a bipartition $V(G)=A\sqcupdot B$ with $X_1\subseteq A$ and
$X_2\subseteq B$ such that
\[
\begin{aligned}
e_G(A,B)&\ge I_{AB}(\mathcal X,p)-\frac32(m^2\Delta)^{1/3},\\
e_G(B,A)&\ge I_{BA}(\mathcal X,p)-\frac32(m^2\Delta)^{1/3}.
\end{aligned}
\]
\end{lemma}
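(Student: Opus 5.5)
The plan is to build the partition randomly: place $X_1$ in $A$ and $X_2$ in $B$, assign $Y$ with star-correlated randomness, and then pass from the expectation to a concrete partition by Chebyshev's inequality.

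\textbf{Setup.} Apply Corollary~\ref{cor:stars} to $H=G[Y]$. This gives a partition $\mathcal P$ of $Y$ into induced stars and singletons with $\sum_{P}e_G(P)\ge L$.

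\textbf{The random assignment.} Independently for each part of $\mathcal P$:
\begin{itemize}
\item a singleton goes to $B$ with probability $p$ and to $A$ otherwise;
\item a star is placed entirely in $A$ with probability $1-2p$, gets centre in $A$ and leaves in $B$ with probability $p$, and gets centre in $B$ and leaves in $A$ with probability $p$.
\end{itemize}

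\textbf{Expectation.} Every vertex of $Y$ then lies in $B$ with probability exactly $p$. Hence:
\begin{itemize}
\item an edge from $X_1$ to $Y$ contributes $p$ to $\E e(A,B)$;
\item an edge from $Y$ to $X_2$ contributes $1-p$ to $\E e(A,B)$;
\item an edge of $G[Y]$ joining two different parts of $\mathcal P$ crosses in each direction with probability $p(1-p)$, by independence;
\item an edge inside a star joins its centre and a leaf, so it goes from $A$ to $B$ with probability exactly $p$, and likewise from $B$ to $A$. This is $p^2$ more than $p(1-p)$.
\end{itemize}
Summing gives $\E e(A,B)\ge I_{AB}(\mathcal X,p)$, and symmetrically $\E e(B,A)\ge I_{BA}(\mathcal X,p)$.

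\textbf{Concentration.} Both cut sizes are sums of edge indicators. Two such indicators are correlated only if their edges touch a common part of $\mathcal P$; edges into the fixed set $X$ carry no randomness beyond their $Y$-endpoint. For a part $P$ put $D(P)=\sum_{v\in P}d_G(v)$. Each edge meets at most two parts, so
\[
\Var e(A,B)\le \sum_{P}D(P)^2\le \Big(\max_P D(P)\Big)\cdot 2m .
\]
A star's centre has degree at most $\Delta$, and each leaf also has degree at most $\Delta$, so $D(P)$ may be as large as about $\Delta^2$. This is too crude, and controlling it is the main obstacle.

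\textbf{Handling heavy stars.} Fix a threshold $t$ and call a star heavy if $D(P)>t$. Since $\sum_P D(P)\le 2m$, there are at most $2m/t$ heavy stars. Each heavy star has at most $\Delta$ edges inside it, because these all meet the centre. Split every heavy star into singletons, so that its vertices are assigned independently with bias $p$. This loses at most $p^2\Delta\le\Delta/4$ per heavy star, hence at most $m\Delta/(2t)$ in total, compared with $p^2L$.

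After this modification every part has $D(P)\le\max(t,\Delta)$, so $\Var\le 2m\max(t,\Delta)$ for each cut. By Chebyshev, each cut falls below its mean by more than $\lambda$ with probability at most $\Var/\lambda^2$. Taking $\lambda=2\sqrt{mt}$ makes each failure probability at most $1/2$; a slightly larger constant gives strict inequality for the union bound over the two cuts, so some outcome satisfies both bounds simultaneously.

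\textbf{Choosing $t$.} The total error is of order $m\Delta/t+\sqrt{mt}$. The choice $t=m^{1/3}\Delta^{2/3}$ balances the two terms at $m^{2/3}\Delta^{1/3}=(m^2\Delta)^{1/3}$. Tuning the constants in $t$ and $\lambda$ should yield the stated factor $\tfrac32$.

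\textbf{Degenerate ranges.} If $t<\Delta$, the bound $\Var\le 2m\Delta$ still gives a deviation of order $\sqrt{m\Delta}\le(m^2\Delta)^{1/3}$, using $\Delta\le m$. If instead $(m^2\Delta)^{1/3}$ exceeds the targets, the statement is trivial.
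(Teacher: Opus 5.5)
Your argument has the same structure as the paper's proof: the same star partition, the same biased star assignment, the same splitting of stars with $\sum_{w\in P}d_G(w)>t$ at the same threshold $t=(m\Delta^2)^{1/3}$, and the same heavy-star loss $m\Delta/(2t)$. It proves the lemma up to the constant. The gap is the constant $\frac32$, which is part of the statement, and your claim that ``tuning the constants should yield $\frac32$'' is false for the bounds you derive.

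\textbf{Why your bounds cannot reach $\frac32$.} Your bound $\Var(Z_i)\le 2mt$ discards the factor $\frac14$ available for covariances of $0/1$ variables. Chebyshev with a union bound over the two cuts then forces $\lambda>2\sqrt{mt}$. The total error therefore exceeds $\frac{m\Delta}{2t}+2\sqrt{mt}$. For your $t$ this is $\frac52(m^2\Delta)^{1/3}$. Writing $t=s\,(m\Delta^2)^{1/3}$ and optimizing, the best you can get is
\[
\Bigl(\tfrac1{2s}+2\sqrt s\Bigr)(m^2\Delta)^{1/3}\ \ge\ 3\cdot 2^{-1/3}(m^2\Delta)^{1/3}\approx 2.38\,(m^2\Delta)^{1/3}.
\]
Without the factor $\frac14$, even a one-sided inequality does not get down to $\frac32$. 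Also, the ``slightly larger'' $\lambda$ you need for a strict union bound already rules out an exact constant.

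\textbf{The fix (the paper's two refinements).}
\begin{enumerate}
\item Use $|\Cov(W_{i,e},W_{i,f})|\le\sqrt{\Var(W_{i,e})\Var(W_{i,f})}\le\frac14$. Put $K=(m\Delta^2)^{1/3}$; note $K\ge\Delta$ always, since $\Delta\le m$, so your degenerate range never occurs. This gives
\[
\Var(Z_i)\le\tfrac14\sum_P D(P)^2\le\tfrac K4\sum_P D(P)\le\tfrac{mK}{2}.
\]
\item Avoid the union bound. Since
\[
\E\bigl((Z_1-\E (Z_1))^2+(Z_2-\E (Z_2))^2\bigr)\le mK,
\]
some outcome has this sum at most $mK$. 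For that outcome both $Z_i\ge\E (Z_i)-\sqrt{mK}$ hold simultaneously, with no slack.
\end{enumerate}
The total error is then exactly
\[
\frac{m\Delta}{2K}+\sqrt{mK}=\tfrac12(m^2\Delta)^{1/3}+(m^2\Delta)^{1/3}=\tfrac32(m^2\Delta)^{1/3}.
\]
Alternatively, once the factor $\frac14$ is in place, Cantelli's one-sided inequality with $\lambda=\sqrt{mK}$ bounds each failure probability by $\frac13$, and the union bound then also works.
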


\begin{proof}
Since $X$ is independent,
every edge has an endpoint in $Y$. Put
\[
K=(m\Delta^2)^{1/3}.
\]
Then $K\ge\Delta$, since $m\ge\Delta$.

Consider the partition of $Y$ given by Corollary~\ref{cor:stars}.
Replace each star $P$ with $\sum_{w\in P}d_G(w)>K$ by its singleton
vertices. The stars being replaced are pairwise disjoint and
$\sum_{w\in Y}d_G(w)\le2m$, so their number is at most $2m/K$.
Every edge within a star is incident with its centre. Consequently, each
such star has at most $\Delta$ edges, and the number $M$ of
edges belonging to the surviving stars satisfies
\begin{equation}\label{eq:surviving}
M\ge L-\frac{2m\Delta}{K}.
\end{equation}
Write the resulting partition as $\{P_1,\ldots,P_r\}$ and put
$C_j=\sum_{w\in P_j}d_G(w)$. Every surviving star satisfies $C_j\le K$,
and the same holds for each singleton because $K\ge\Delta$.

Make the choices for distinct parts $P_j$ independently. For each star,
place all its vertices in $A$ with probability $1-2p$, its centre in $A$
and all its leaves in $B$ with probability $p$, and its centre in $B$
and all its leaves in $A$ with probability $p$. These assignments are
summarized in the table
\[
\begin{array}{c|c|c}
\text{centre}&\text{all leaves}&\text{probability}\\ \hline
A&A&1-2p\\
A&B&p\\
B&A&p
\end{array}
\]
For a star on two vertices, the distribution is unchanged
when its centre and leaf are exchanged.
For each singleton, place its vertex in $A$ with probability $1-p$ and
in $B$ with probability $p$. Place $X_1$ in $A$ and $X_2$ in $B$.
For every star, both the centre and each leaf
belong to $A$ with probability $(1-2p)+p=1-p$. Thus every vertex of $Y$
belongs to $A$ with probability $1-p$ and to $B$ with probability $p$.

Let $Z_1=e_G(A,B)$ and $Z_2=e_G(B,A)$ be the resulting random cut
sizes. We compute $\E (Z_1)$ by considering all possible locations of the
endpoints of an edge. An edge from $X_1$ to $Y$ contributes to $Z_1$
exactly when its head belongs to $B$, which has probability $p$.
An edge from $Y$ to $X_2$ contributes exactly when its tail belongs to
$A$, which has probability $1-p$. Edges from $X_2$ to $Y$ and from $Y$
to $X_1$ never contribute to $Z_1$, because their tail lies in $B$ or
their head lies in $A$, respectively. There are no edges with both
endpoints in $X$.

Now consider an edge with both endpoints in $Y$. If it belongs to a
surviving star, its endpoints are the centre and a leaf. When it is
directed from the centre to the leaf, it contributes to $Z_1$ in the
second row of the table; when it is directed from the leaf to the centre,
it contributes in the third row. In either case its contribution
probability is $p$. There are $M$ such edges. Each of the remaining
$e(Y)-M$ edges joins distinct parts $P_j$ of the partition of $Y$.
Its endpoints are assigned independently, so its tail belongs to $A$
and its head to $B$ with probability $p(1-p)$.

\noindent Linearity of expectation now gives
\[
\begin{aligned}
\E (Z_1)
&=p\,e(X_1,Y)+(1-p)e(Y,X_2)\\
&\quad +pM+p(1-p)\bigl(e(Y)-M\bigr)\\
&=p\,e(X_1,Y)+(1-p)e(Y,X_2)+p(1-p)e(Y)+p^2M\\
&=I_{AB}(\mathcal X,p)+p^2(M-L),
\end{aligned}
\]
where the last equality uses \eqref{eq:ideals}. Analogously,
\[
\E (Z_2)=I_{BA}(\mathcal X,p)+p^2(M-L).
\]
Using \eqref{eq:surviving} and $p\le1/2$, we obtain
\begin{equation}\label{eq:means}
\begin{aligned}
\E (Z_1)&\ge I_{AB}(\mathcal X,p)-\frac{2p^2m\Delta}{K}
        \ge I_{AB}(\mathcal X,p)-\frac{m\Delta}{2K},\\
\E (Z_2)&\ge I_{BA}(\mathcal X,p)-\frac{2p^2m\Delta}{K}
        \ge I_{BA}(\mathcal X,p)-\frac{m\Delta}{2K}.
\end{aligned}
\end{equation}
We next bound the variances. For each $j$, let $\mathcal A_j$ be the
set of edges having at least one endpoint in $P_j$. Then
$|\mathcal A_j|\le C_j$. Write $Z_i=\sum_{e\in E(G)}W_{i,e}$, where
$W_{i,e}$ is the $0/1$ random variable equal to $1$ if edge $e$
contributes to the respective cut and to $0$ otherwise.
The random variable $W_{i,e}$ depends only on the assignments of the
parts $P_j$ for which $e\in\mathcal A_j$.
Thus $W_{i,e}$ and $W_{i,f}$ are independent
unless $e$ and $f$ belong to a common $\mathcal A_j$. By
Cauchy--Schwarz and the variance bound for a $0/1$ random variable,
\[
|\Cov(W_{i,e},W_{i,f})|
\le\sqrt{\Var(W_{i,e})\Var(W_{i,f})}\le\frac14.
\]
There are at most $\sum_j|\mathcal A_j|^2$ ordered pairs $(e,f)$
belonging to a common $\mathcal A_j$. Hence, for each $i=1,2$,
\[
\begin{aligned}
\Var(Z_i)
&=\sum_{e,f\in E(G)}\Cov(W_{i,e},W_{i,f})\\
&\le\frac14\sum_{j=1}^r|\mathcal A_j|^2
\le\frac14\sum_{j=1}^r C_j^2
\le\frac K4\sum_{j=1}^r C_j
\le\frac{mK}{2}.
\end{aligned}
\]
It follows that
\[
\E\bigl((Z_1-\E (Z_1))^2+(Z_2-\E (Z_2))^2\bigr)\le mK.
\]
The pair $(Z_1,Z_2)$ takes only finitely many values, so some outcome
of positive probability satisfies
$(Z_1-\E (Z_1))^2+(Z_2-\E (Z_2))^2\le mK$. For that outcome,
$Z_i\ge\E (Z_i)-\sqrt{mK}$ for $i=1,2$. Using \eqref{eq:means}, we obtain
\[
\begin{aligned}
Z_1&\ge I_{AB}(\mathcal X,p)-\frac{m\Delta}{2K}-\sqrt{mK},\\
Z_2&\ge I_{BA}(\mathcal X,p)-\frac{m\Delta}{2K}-\sqrt{mK}.
\end{aligned}
\]
Finally,
\[
\frac{m\Delta}{2K}+\sqrt{mK}
=\frac12(m^2\Delta)^{1/3}+(m^2\Delta)^{1/3}
=\frac32(m^2\Delta)^{1/3}.
\]
This proves the lemma.
\end{proof}

\section{Finding the auxiliary partition}\label{sec:asympt}
\noindent
Now we aim to prove~Theorem~\ref{thm:asympt} by applying Lemma~\ref{lem:completion} with suitable parameters. 

Let $G$ be a digraph with $m$ edges
and minimum outdegree at least $d\geq 2$. Put
\[
X=\{v\in V(G):d_G(v)>m^{4/7}\} \ \text{ and } \ Y=V(G)\setminus X.
\]
Since $\sum_{v\in V(G)}d_G(v)=2m$, we have $|X|\le2m^{3/7}$, and so
\[
e_G(X)\le |X|(|X|-1)\le 4m^{6/7}.
\]
If $Y=\emptyset$, then $m=e(X)\le4m^{6/7}$. Since $c_d<1/4$, we then have
\[
c_dm-\frac52m^{6/7}\le m^{6/7}-\frac52m^{6/7}<0,
\]
which means every partition satisfies the required inequalities.

Assume $Y\ne\emptyset$, and let $G'$ be the spanning subdigraph
obtained by deleting all edges with both endpoints in $X$.
Then $X$ is independent in $G'$ and $m'=|E(G')|=m-e_G(X)$.
For every $w\in Y$, both its indegree and its outdegree are unchanged.
In particular, we have $d_{G'}^+(w)\ge d$ for every $w\in Y$, and
\[
\Delta:=\max_{w\in Y}d_{G'}(w)\le m^{4/7}.
\]
Put $y=|Y|$, $\tau=\tau(G'[Y])$ and $L=(y-\tau)/2$, and use
\eqref{eq:ideals} to define $I_{AB}$ and $I_{BA}$ for $G'$ and this
value of $L$. It suffices to find an ordered partition
$\mathcal X=(X_1,X_2)$ of $X$ and $p\in[0,1/2]$ such that
\begin{equation}\label{eq:target}
\min\{I_{AB}(\mathcal X,p),I_{BA}(\mathcal X,p)\}\ge c_dm'.
\end{equation}
Indeed, Lemma~\ref{lem:completion} then gives a partition
$V(G)=A\sqcupdot B$ for which each directed cut in $G'$ has size at least
\[
\begin{aligned}
c_dm'-\frac32(m'^2\Delta)^{1/3}
&\ge c_dm-c_de_G(X)-\frac32(m^2\cdot m^{4/7})^{1/3}\\
&\ge c_dm-\frac{1}{4}\cdot 4m^{6/7}-\frac32m^{6/7}
=c_dm-\frac52m^{6/7}.
\end{aligned}
\]
Restoring the deleted edges does not decrease either
cut size.

The graph $G'$ and the sets $X,Y$ remain fixed throughout this section.
All edge counts without a subscript refer to $G'$. Denote
\[ \alpha:=e(X,Y), \ \ \beta:=e(Y,X), \ \ \text{and} \ \ \gamma:=e(Y).
\]
Since $X$ is
independent in $G'$, we have
\begin{align}\label{eq:y-degree}
m'=\alpha+\beta+\gamma
\ge \alpha+\sum_{w\in Y}d_{G'}^+(w)
\ge \alpha+dy.
\end{align}
We need the following technical lemma.
\begin{lemma}\label{lem:degree}
For every $S\subseteq X$ we have
\begin{equation}\label{eq:degree}
	e(Y,X\setminus S)\ge(d-|S|)\tau-L.
\end{equation}
\end{lemma}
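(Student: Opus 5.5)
The plan is to reduce \eqref{eq:degree} to a separate inequality for each component of $G'[Y]$, and then to handle a simple component by looking at a single well-chosen vertex.

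\textbf{Splitting into components.} Let the components of $G'[Y]$ have vertex sets $C_1,\dots,C_k$. Every edge from $Y$ to $X\setminus S$ leaves exactly one component, so
\[
e(Y,X\setminus S)=\sum_j e(C_j,X\setminus S).
\]
By definition of $L$,
\[
L=\sum_{j:\,C_j\text{ simple}}\frac{|C_j|-1}{2}+\sum_{j:\,C_j\text{ not simple}}\frac{|C_j|}{2}.
\]
It therefore suffices to prove two per-component bounds. For every non-simple $C_j$ we need $e(C_j,X\setminus S)\ge -|C_j|/2$, which is trivial. For every simple $C_j$ we need
\[
e(C_j,X\setminus S)\ge (d-|S|)-\frac{|C_j|-1}{2}.
\]
Summing these over $j$ gives $(d-|S|)\tau-L$, which is exactly \eqref{eq:degree}.

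\textbf{The simple case.} Fix a simple component $C$ with $c$ vertices. If $d-|S|\le (c-1)/2$, the right-hand side is nonpositive and there is nothing to prove. Otherwise, use the fact that $G'[C]$ contains no pair of opposite edges, so $e(C)\le\binom c2$. The average outdegree inside $C$ is $e(C)/c\le (c-1)/2$, so some vertex $w\in C$ has at most $(c-1)/2$ out-neighbours in $C$.

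Since $C$ is a component of $G'[Y]$, $w$ has no out-neighbours in $Y\setminus C$. Recall from the setup that $d^+_{G'}(w)\ge d$, because deleting edges inside $X$ does not change outdegrees of vertices in $Y$. Hence $w$ sends at least $d-(c-1)/2$ edges into $X$. At most $|S|$ of these go into $S$, since there is at most one edge from $w$ to each vertex. So
\[
e(C,X\setminus S)\ge e(w,X\setminus S)\ge d-|S|-\frac{c-1}{2},
\]
as required.

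\textbf{Where the difficulty lies.} The only delicate point is recognising that the bound should not be attempted by summing over all vertices of a simple component. A simple component may contain many internal edges, which would swamp any sum over its vertices. Instead one extracts a single vertex of small internal outdegree, and the simplicity of the component is used precisely to guarantee such a vertex via $e(C)\le\binom c2$. Once this is seen, the rest is bookkeeping with the definition of $L$.
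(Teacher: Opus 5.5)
Your proof is correct and follows essentially the same route as the paper. In each simple component you pick one vertex of internal outdegree at most $(|C|-1)/2$, then use $d^+_{G'}\ge d$ and the fact that at most $|S|$ of its edges go into $S$; the paper does the same, but sums over the set of these $\tau$ chosen vertices at once and bounds $\sum_{C\text{ simple}}(|C|-1)/2\le L$, instead of splitting $L$ component by component.
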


\begin{proof}
If $G'[Y]$ has no simple component, the right hand side is negative, and the conclusion holds automatically. Otherwise, for each simple component $C$ of $G'[Y]$, take a vertex $u_C\in C$ of minimum outdegree in $G'[Y]$. It satisfies
	\[
	d_{G'[Y]}^+(u_C)\le\frac{|C|-1}{2}.
	\]
	Let the set $U$ consist of these vertices, with no vertex chosen from the other
	components. We have  $|U|=\tau$ and
	\[
	\sum_{u\in U}d_{G'[Y]}^+(u)
	=\sum_{\substack{C:\,\text{ simple}}}d_{G'[Y]}^+(u_C)
	\le\sum_{\substack{C:\,\text{ simple}}}\frac{|C|-1}{2}
	\le L.
	\]
Every $u\in U$ sends at most
$|S|$ edges to $S$, so summing the outdegrees in $G'$ gives
\[
\begin{aligned}
d\tau\leq \sum_{u\in U} d^+_{G'}(u)
&=\sum_{u\in U}\bigl(d_{G'[Y]}^+(u)+e(u,S)+e(u,X\setminus S)\bigr)\\
&\le L+|S|\tau+e(Y,X\setminus S).
\end{aligned}
\]
The assertion follows by rearranging.
\end{proof}
Statements~(1) and~(2) of the following lemma are numerical forms of the balancing arguments
of Lee--Loh--Sudakov~\cite{LLS} and Liu--Yu~\cite{LY}.
Statement~(3) is an elementary consequence of the same greedy balancing
argument. 
\begin{lemma}\label{lem:signs}
Let $N\ge 0$, let $\ell$ be a nonnegative integer, and let
$a_1,\ldots,a_\ell\in[-N,N]$. Define
\[
\theta=\min_{(\sigma_1,\ldots,\sigma_\ell)\in\{-1,1\}^{\ell}}
\left|\sum_{i=1}^{\ell}\sigma_i a_i\right|.
\]
Suppose $\theta>N/2$. Put
$D=\{i:|a_i|\ge\theta\}$ and $g=\sum_{i\notin D}|a_i|$.
Then the following hold:
\begin{enumerate}
\item $|D|\notin\{0,2\}$;
\item $g\le N-\theta$;
\item for every $z\in D$, there exist signs $\sigma_i\in\{-1,1\}$ for
$i\ne z$ such that, writing $r=\sum_{i\ne z}\sigma_i a_i$, we have
\[
|a_z|+r\ge\theta\qquad\text{and}\qquad |a_z|-r\ge\theta.
\]
\end{enumerate}
\end{lemma}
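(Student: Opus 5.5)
The plan rests on a single \emph{greedy balancing} fact. Suppose we start from a real number $s$ and add numbers $b_1,\dots,b_k$ one at a time, each time choosing the sign that moves the running total towards $0$ (that is, choosing $\sigma_j$ so that the running total and $\sigma_j b_j$ have opposite signs). Then the final total $t$ satisfies
\[
|t|\le\max\Bigl\{\,|s|-\sum_j|b_j|,\ \max_j|b_j|\,\Bigr\},\qquad\text{and in particular}\qquad |t|\le\max\{|s|,\max_j|b_j|\}.
\]
To prove it, note two things. As long as the running total $t'$ satisfies $|t'|\ge|b_j|$, the next step decreases $|t'|$ by exactly $|b_j|$. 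Once $|t'|\le\max_j|b_j|$ holds, the absolute value never exceeds $\max_j|b_j|$ again, because $\bigl||t'|-|b_j|\bigr|\le\max\{|t'|,|b_j|\}$. Taking $s=0$ gives the familiar bound $\theta\le\max_i|a_i|\le N$.

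For (1) with $|D|=0$, we have $|a_i|<\theta$ for all $i$, so greedy balancing from $s=0$ produces a signed sum of absolute value $<\theta$. This contradicts the definition of $\theta$.

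For $|D|=2$, say $D=\{1,2\}$, we start from $s=|a_1|-|a_2|$. Then $|s|\le N-\theta<\theta$, using $|a_1|,|a_2|\in[\theta,N]$ and $\theta>N/2$. We then greedily add the elements outside $D$, all of absolute value $<\theta$. The second form of the bound gives a signed sum of absolute value $<\theta$, again a contradiction.

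For (2), $D\neq\emptyset$ by (1). Choose signs on $D$ by greedy balancing in order of decreasing $|a_i|$; this produces a partial sum $s$ with $|s|\le\max_{i\in D}|a_i|\le N$. Then greedily add the elements outside $D$. Since the final total has absolute value at least $\theta$, while $\max_{i\notin D}|a_i|<\theta$, the first form of the bound forces $|s|-g\ge\theta$. Hence $g\le|s|-\theta\le N-\theta$.

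For (3), fix $z\in D$. Among all sign choices on $\{i:i\ne z\}$, choose one minimising $|r|$; the two required inequalities are symmetric under $r\mapsto -r$. By greedy balancing from $s=0$ we have $|r|\le N$. By the definition of $\theta$ (the total $\pm a_z+r$ ranges over valid signings) we get
\[
\bigl||a_z|-|r|\bigr|\ge\theta .
\]
If $|r|\ge|a_z|+\theta$, then $|r|\ge2\theta>N$, which is a contradiction. Therefore $|r|\le|a_z|-\theta$, which is exactly $|a_z|\pm r\ge\theta$.

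I expect the only delicate point to be stating the greedy inequality cleanly, including the ``once small, stays small'' clause, and checking the edge cases such as $\ell=0$. In that case $\theta=0$, so the hypothesis $\theta>N/2$ fails; the case $N=0$ is likewise impossible. The rest is direct.
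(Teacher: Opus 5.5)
Your proof is correct and follows essentially the same route as the paper: greedy sign-balancing (your packaged bound $|t|\le\max\{|s|-\sum_j|b_j|,\max_j|b_j|\}$ combines the paper's ``exact decrease'' and ``once small, stays small'' observations), the same two cases for (1), the exact-decrease argument on the terms outside $D$ for (2), and the bound $|r|\le N$ together with $\theta>N/2$ for (3). Choosing $r$ to minimise $|r|$ in (3) is harmless but unnecessary: any greedily obtained $r$ already satisfies $|r|\le N$, and that is all the argument uses.
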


\begin{proof}
For any finite list of numbers in $[-N,N]$, signs can be chosen greedily so that
the absolute value of their signed sum is at most $N$. Indeed,
process the numbers in any order, starting with sum $0$. Choose the
sign of the next term so that its contribution has the opposite sign
to the current sum, with either choice allowed when the current sum
or the term is $0$. The new absolute value is the absolute difference
between the old absolute value and the magnitude of the next term,
and so is again at most $N$.

The same rule shows that, if the current sum has absolute value less
than $\theta$ and every remaining term has magnitude less than
$\theta$, then all subsequent sums have absolute value less than
$\theta$. If $D=\emptyset$, applying this rule to the entire list
would give a full signed sum of absolute value less than $\theta$,
contrary to the definition of $\theta$. If $D=\{i,j\}$, first choose
signs for these two terms so that their sum has absolute value
\[
\bigl||a_i|-|a_j|\bigr|\le N-\theta<\theta.
\]
Applying the same rule to the remaining terms gives the same
contradiction. This proves (1).

For (2), first choose signs on $D$ so that their sum $A$ satisfies
$|A|\le N$, and then process all terms outside $D$ by the same rule.
Every intermediate sum, including $A$, has absolute value at least
$\theta$: otherwise all subsequent sums would have absolute value
less than $\theta$, again contradicting the definition of $\theta$.
Each remaining term has magnitude less than $\theta$, so adding it
cannot change the sign of the current sum and reduces
its absolute value by exactly the magnitude of that term. The final
sum therefore has absolute value $|A|-g$. Consequently, we have
\[
\theta\le |A|-g\le N-g,
\]
which gives $g\le N-\theta$.

Finally, fix $z\in D$, and apply the same sign rule to all indices other
than $z$, obtaining $r=\sum_{i\ne z}\sigma_i a_i$ with $|r|\le N$.
Both $|a_z|+r$ and $|a_z|-r$ are full signed sums of the original list.
Since $|a_z|\ge\theta$ and $\theta>N/2$, we have
\[
\min\{|a_z|+r,|a_z|-r\}
=|a_z|-|r|\ge\theta-N>-\theta.
\]
No full signed sum belongs to $(-\theta,\theta)$, by the definition of
$\theta$. Hence this smaller value is at least $\theta$, proving (3).
\end{proof}

Returning to the graphs, for $x\in X$, put $a_x=d_{G'}^-(x)-d_{G'}^+(x)$.
Since $X$ is independent, we have $|a_x|\le y$.
For an ordered partition $\mathcal X=(X_1,X_2)$ of $X$, let
$\sigma_x=1$ for $x\in X_1$ and $\sigma_x=-1$ for $x\in X_2$,
and put
\[
D_{\mathcal X}=\sum_{x\in X}\sigma_x a_x.
\]
Every choice of signs is realized by an ordered partition of $X$.
Define
\[
\theta=\min_{\mathcal X}|D_{\mathcal X}|,\qquad
Z^-=\{x\in X:a_x\ge\theta\} \ \text{ and } \
Z^+=\{x\in X:a_x\le-\theta\},
\]
where the minimum is over all ordered partitions of $X$.

\begin{lemma}\label{lem:fair}
	If $\theta\le m'/(2d-1)+L$, then there is an ordered partition
	$\mathcal X=(X_1,X_2)$ of $X$ such that
	\[
	\min\{I_{AB}(\mathcal X,1/2),I_{BA}(\mathcal X,1/2)\}
	\ge c_dm'.
	\]
\end{lemma}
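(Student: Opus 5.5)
The plan is to take $p=1/2$ and let $\mathcal X$ be an ordered partition of $X$ attaining the minimum $\theta=|D_{\mathcal X}|$. The proof then reduces to computing the sum and the difference of $I_{AB}(\mathcal X,1/2)$ and $I_{BA}(\mathcal X,1/2)$ in terms of $m'$, $L$ and $D_{\mathcal X}$.

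First, the sum. By \eqref{eq:ideals} with $p=1/2$, each of $e(X_1,Y)$, $e(Y,X_2)$, $e(X_2,Y)$ and $e(Y,X_1)$ appears with coefficient $1/2$ in exactly one of the two quantities. Since $X$ is independent in $G'$, we have $\alpha+\beta+\gamma=m'$, and therefore
\[
I_{AB}+I_{BA}=\tfrac12(\alpha+\beta)+\tfrac12\gamma+\tfrac12L=\tfrac12(m'+L).
\]

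Second, the difference. Again because $X$ is independent, for each $x\in X$ we have $d^-_{G'}(x)=e(Y,x)$ and $d^+_{G'}(x)=e(x,Y)$, so $a_x=e(Y,x)-e(x,Y)$. The terms $\gamma/4$ and $L/4$ cancel, giving
\[
I_{AB}-I_{BA}=\tfrac12\Bigl(\sum_{x\in X_2}a_x-\sum_{x\in X_1}a_x\Bigr)=-\tfrac12D_{\mathcal X}.
\]
Hence
\[
\min\{I_{AB},I_{BA}\}=\tfrac12\bigl(I_{AB}+I_{BA}-|I_{AB}-I_{BA}|\bigr)=\tfrac14\bigl(m'+L-\theta\bigr).
\]

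It remains to check that $\tfrac14(m'+L-\theta)\ge c_dm'=\frac{(d-1)m'}{2(2d-1)}$. This is equivalent to $\theta\le m'+L-\frac{2(d-1)}{2d-1}m'=\frac{m'}{2d-1}+L$, which is exactly the hypothesis. There is no real obstacle. The only points needing care are the sign conventions relating $a_x$ and $\sigma_x$ to the cut terms, and the use of independence of $X$ to write $m'=\alpha+\beta+\gamma$. The sign of $D_{\mathcal X}$ is irrelevant anyway, since only $|D_{\mathcal X}|=\theta$ enters.
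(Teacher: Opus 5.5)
Your proposal is correct and matches the paper's proof essentially step for step. The paper likewise computes the sum $(m'+L)/2$ and the difference $D_{\mathcal X}/2$ of the two quantities at $p=1/2$, takes $\mathcal X$ attaining $|D_{\mathcal X}|=\theta$, and compares $(m'+L-\theta)/4$ with $c_dm'$ using the hypothesis.
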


\begin{proof}
	Let $\mathcal X=(X_1,X_2)$ be an ordered partition of $X$.
	By \eqref{eq:ideals}, we have
	\[
	\begin{aligned}
		I_{AB}(\mathcal X,1/2)+I_{BA}(\mathcal X,1/2)
		&=\frac{m'+L}{2},\\
		I_{BA}(\mathcal X,1/2)-I_{AB}(\mathcal X,1/2)
		&=\frac{D_{\mathcal X}}2.
	\end{aligned}
	\]
	Consequently, we have
	\[
	\min\{I_{AB}(\mathcal X,1/2),I_{BA}(\mathcal X,1/2)\}
	=\frac{m'+L-|D_{\mathcal X}|}{4},
	\]
	and the definition of $\theta$ gives
	\begin{equation}\label{eq:fair-optimum}
	\max_{\mathcal X}\min\{I_{AB}(\mathcal X,1/2),I_{BA}(\mathcal X,1/2)\}
	=\frac{m'+L-\theta}{4},
	\end{equation}
	where the maximum is taken over all ordered partitions of $X$.
	
    Choose a partition $\mathcal X$ attaining $|D_{\mathcal X}|=\theta$. This $\mathcal X$ satisfies
	\[
	\begin{aligned}
		\min\{I_{AB}(\mathcal X,1/2),I_{BA}(\mathcal X,1/2)\}
		&=\frac{m'+L-\theta}{4}\\
		&\ge\frac{m'-m'/(2d-1)}4
		=c_dm'.
	\end{aligned}
	\]
\end{proof}
\begin{lemma}\label{lem:few}
	If $\theta>m'/(2d-1)+L$, then $|Z^-|\ge3$.
\end{lemma}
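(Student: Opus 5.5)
The plan is to apply Lemma~\ref{lem:signs} to the list $(a_x)_{x\in X}$ with $N=y$, which is legitimate because $X$ is independent in $G'$ and hence $|a_x|\le y$. For this we need $\theta>y/2$. By \eqref{eq:y-degree} we have $m'\ge dy$, and so
\[
\theta>\frac{m'}{2d-1}+L\ge\frac{dy}{2d-1}>\frac y2 .
\]
The set $D$ of Lemma~\ref{lem:signs} is exactly $Z^-\sqcupdot Z^+$. Part (1) then gives $|D|=1$ or $|D|\ge3$. We argue by contradiction and assume $|Z^-|\le2$. Two cases remain: (a) $|D|=1$; (b) $|D|\ge3$ with $Z^+\ne\emptyset$. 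In each case we aim to derive $\theta\le m'/(2d-1)+L$, contradicting the hypothesis.

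The inequalities to combine are the following.
\begin{itemize}
\item The edge count $m'=\alpha+\beta+\gamma$.
\item The outdegree bound $\gamma\ge dy-\beta$, which is the content of \eqref{eq:y-degree}.
\item The identity $y=\tau+2L$.
\item Lemma~\ref{lem:degree}, applied with $S=\emptyset$ or with $S=\{z\}$ for the exceptional vertex $z$.
\item Part (2) of Lemma~\ref{lem:signs}, that is, $g\le y-\theta$, together with $g\ge\bigl|\sum_{x\notin D}a_x\bigr|$.
\item The identity $\sum_{x\in X}a_x=\beta-\alpha$.
\end{itemize}
Each vertex of $Z^+$ has outdegree at least $\theta$, contributing at least $\theta$ to $\alpha$. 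Each vertex of $Z^-$ has indegree at least $\theta$, contributing at least $\theta$ to $\beta$.

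In case (b), $D$ contains some $x\in Z^+$, so $\alpha\ge|Z^+|\theta$ and $\beta\ge|Z^-|\theta$. Since $|D|\ge3$, the three largest degrees in $X$ contribute at least $3\theta$ to $\alpha+\beta$. Hence $m'\ge\alpha+dy\ge 3\theta+(d-1)y$ roughly; more carefully, we split $\alpha$ and $\beta$ and use $\gamma\ge dy-\beta$. Combined with $\theta\le y$, this should give $m'\ge(2d-1)\theta$ up to the $L$ correction. I will keep track of how $L$ enters through $y=\tau+2L$.

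In case (a), with $D=\{z\}$, the argument in part (2) shows that $\theta\le|a_z|-g$. If $z\in Z^+$, Lemma~\ref{lem:degree} with $S=\{z\}$ bounds $e(Y,X\setminus\{z\})$ from below by $(d-1)\tau-L$. This forces $g$, or the indegrees of $X$ outside $z$, to be large, which caps $\theta$. If $z\in Z^-$ (the configuration closest to the extremal example), we compare $\theta\le d^-(z)-d^+(z)-g$ with the fact that every vertex of $Y$ sends at least $d$ edges out. The edges not going to $z$ must land in $Y$ or in $X\setminus\{z\}$; those landing in $X\setminus\{z\}$ count towards $g$ up to the signed balance with $\alpha$. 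This yields $m'\ge(2d-1)(\theta-L)$.

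The main obstacle is getting the exact coefficient $2d-1$ together with the $+L$ term in case (a) with $z\in Z^-$. There the naive bounds only give roughly $m'\ge(d+1)\theta$, so the $\gamma$ contribution and the star-count term $L$ must be exploited sharply, via Lemma~\ref{lem:degree} and $y=\tau+2L$. I would first try writing $\theta\le y-g$ and $m'\ge\alpha+dy$, and eliminating $y$ and $g$ using Lemma~\ref{lem:degree}.
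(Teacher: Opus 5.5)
\textbf{Verdict: genuine gap.} The setup is correct: Lemma~\ref{lem:signs} applies with $N=y$, $D=Z^-\sqcupdot Z^+$, and $|D|\notin\{0,2\}$. However, the proof itself is not there. In case (a) with $z\in Z^-$ you leave the decisive estimate as ``the main obstacle'', and in case (b) you only say the bound ``should'' follow.

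\textbf{The case (b) sketch fails.} It invokes $\alpha\ge|Z^+|\theta$, $\beta\ge|Z^-|\theta$, $\alpha+\beta\ge3\theta$, $\gamma\ge dy-\beta$, $\theta\le y$ and $y=\tau+2L$. All of these hold for $|Z^-|=2$, $|Z^+|=1$, $\theta=\tau=y$, $L=0$, $\alpha=y$, $\beta=dy$, $\gamma=0$, $m'=(d+1)y$. Yet $m'+(2d-1)L\ge(2d-1)\theta$ fails there for every $d\ge3$. Your intermediate claim $m'\ge3\theta+(d-1)y$ does not follow from $\alpha+\beta\ge3\theta$, and even if granted it only yields $(d+2)\theta$.

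\textbf{What the sketch ignores.} It never tracks where the out-edges of $Y$ go. In the configuration above, each vertex of $Y$ sends $d$ edges to $X$, at most $|Z^-|\le 2$ of them into $Z^-$. So at least $(d-2)y$ edges enter $X\setminus Z^-$. Since $\sum_{x\in X\setminus Z^-}a_x\le g-|Z^+|\theta$, these edges must essentially be returned to $Y$. That is what forces $\alpha$ up to about $(d-1)y$.

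\textbf{The missing step.} It is needed in every case, not only in case (a):
\[
\alpha-e(Y,X\setminus Z^-)=\sum_{x\in Z^-}d^+(x)-\sum_{x\in X\setminus Z^-}a_x\ge|Z^+|\theta-g .
\]
Combine it with Lemma~\ref{lem:degree} for $S=Z^-$, which gives $e(Y,X\setminus Z^-)\ge(d-|Z^-|)\tau-L$. For $|Z^-|=2$ this needs a two-element $S$, which is not among the choices you list. Alternatively, $S=\{z_1\}$ together with $d^-(z_2)\le y$ also suffices. Insert both into $m'\ge\alpha+dy$, and use $g\le y-\theta$ and $y=\tau+2L$. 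This gives, uniformly,
\[
(|Z^-|-1)\tau+m'+(2d-1)L-(2d-1)\theta\ge(2d-2)(y-\theta)+|Z^+|\theta .
\]

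\textbf{Finishing.} If $|Z^-|\le1$, the left side is negative by hypothesis, while the right side is nonnegative because $\theta\le y$. If $|Z^-|=2$, then $|Z^+|\ge1$ by Lemma~\ref{lem:signs}(1), and the inequality forces $\tau>y+(2d-3)(y-\theta)\ge y$, which is impossible.

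Your verbal description of case (a) with $z\in Z^-$ is exactly this double count. Carried out, it gives $m'+(2d-1)L\ge(2d-2)y+\theta\ge(2d-1)\theta$. Case (b) needs the same mechanism rather than a degree count on $D$, and no case split on $|D|$ is required.
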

\begin{proof}
	By \eqref{eq:y-degree} and $L\ge0$, we have
	\[
	\theta>\frac{m'}{2d-1}+L
	\ge\frac{dy}{2d-1}
	>\frac{y}{2}.
	\]
	Since $\theta>0$, the sets $Z^-$ and $Z^+$ are disjoint.
	Put
	\[
	g=\sum_{x\in X\setminus(Z^-\cup Z^+)}|a_x|.
	\]
	Apply Lemma~\ref{lem:signs} to the numbers $a_x$ with $N=y$.
	Its first two assertions give
	\[
	|Z^-|+|Z^+|\notin\{0,2\}
	\qquad\text{and}\qquad
	0\le g\le y-\theta.
	\]
	By the definition of $a_x$, we have
	\[
	\begin{aligned}
		\alpha-e(Y,X\setminus Z^-)
		&=\sum_{x\in Z^-}d_{G'}^+(x)
		+\sum_{x\in Z^+}(-a_x)\\
		&\quad+\sum_{x\in X\setminus(Z^-\cup Z^+)}(-a_x)\\
		&\ge |Z^+|\theta-g.
	\end{aligned}
	\]
	Using \eqref{eq:y-degree}, Lemma~\ref{lem:degree} with $S=Z^-$,
	and $y=2L+\tau$, we obtain
	\[
	\begin{aligned}
		m'+(2d-1)L
		&\ge dy+\alpha+(2d-1)L\\
		&\ge dy+\alpha-e(Y,X\setminus Z^-)+(d-|Z^-|)\tau+(2d-2)L\\
		&=(2d-1)y-(|Z^-|-1)\tau+\alpha-e(Y,X\setminus Z^-)\\
		&\ge (2d-1)y-(|Z^-|-1)\tau+|Z^+|\theta-g\\
		&\ge (2d-2)y-(|Z^-|-1)\tau+(|Z^+|+1)\theta.
	\end{aligned}
	\]
	Rearranging gives 
		\begin{equation}\label{eq:small-count}
		\begin{aligned}
			(|Z^-|-1)\tau
			&+m'+(2d-1)L-(2d-1)\theta\\
			&\ge (2d-2)(y-\theta)+|Z^+|\theta.
		\end{aligned}
	\end{equation}
	
	If $|Z^-|\le1$, then the left-hand side of \eqref{eq:small-count} is negative,
	whereas its right-hand side is nonnegative, a contradiction.
	If $|Z^-|=2$, the inequality
	$|Z^-|+|Z^+|\notin\{0,2\}$ gives $|Z^+|\ge1$.
	By \eqref{eq:small-count}, we then have
	\[
	\begin{aligned}
		\tau
		&>(2d-2)(y-\theta)+|Z^+|\theta\\
		&\ge (2d-2)(y-\theta)+\theta\\
		&=y+(2d-3)(y-\theta)\\
		&\ge y,
	\end{aligned}
	\]
	contrary to $\tau\le y$. Hence $|Z^-|\ge3$.
\end{proof}

\begin{lemma}\label{lem:paired}
	Suppose $\theta>y/2$ and $z\in Z^-$. Then there exist
	two ordered partitions $\mathcal X^{(1)}=(X_1^{(1)},X_2^{(1)})$ and $\mathcal X^{(2)}=(X_1^{(2)},X_2^{(2)})$ of $X$
	such that $z\in X_1^{(1)}\cap X_1^{(2)}$, every vertex of $X\setminus \{z\}$ lies in $X_1^{(1)}$ if and only if it lies in $X_2^{(2)}$, and 
	\[
	D_{\mathcal X^{(j)}}\ge\theta
	\]
	for $j=1,2$.
\end{lemma}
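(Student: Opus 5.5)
This follows directly from Lemma~\ref{lem:signs}(3), applied to the numbers $a_x$ ($x\in X$) with $N=y$. Since $X$ is independent in $G'$, we have $|a_x|\le y$, and by hypothesis $\theta>y/2=N/2$. So Lemma~\ref{lem:signs} applies, and its set $D=\{x:|a_x|\ge\theta\}$ equals $Z^-\cup Z^+$. In particular $z\in Z^-\subseteq D$, and $a_z=|a_z|\ge\theta>0$.

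By Lemma~\ref{lem:signs}(3), there are signs $\sigma_x\in\{-1,1\}$ for $x\ne z$ such that $r=\sum_{x\ne z}\sigma_x a_x$ satisfies
\[
a_z+r\ge\theta\qquad\text{and}\qquad a_z-r\ge\theta .
\]
Define $\mathcal X^{(1)}$ by placing $z$ in $X_1^{(1)}$ and each $x\ne z$ in $X_1^{(1)}$ if $\sigma_x=1$ and in $X_2^{(1)}$ if $\sigma_x=-1$. Define $\mathcal X^{(2)}$ by placing $z$ in $X_1^{(2)}$ and each $x\ne z$ in $X_1^{(2)}$ if $\sigma_x=-1$ and in $X_2^{(2)}$ if $\sigma_x=1$. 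Then $z\in X_1^{(1)}\cap X_1^{(2)}$, and every $x\ne z$ lies in $X_1^{(1)}$ exactly when it lies in $X_2^{(2)}$.

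From the definition of $D_{\mathcal X}$, the sign attached to $z$ is $+1$ in both partitions, so
\[
D_{\mathcal X^{(1)}}=a_z+r\ge\theta,\qquad D_{\mathcal X^{(2)}}=a_z-r\ge\theta .
\]

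There is no real obstacle: Lemma~\ref{lem:signs}(3) was set up for exactly this use. The only points to verify are that $N=y$ is a valid bound for all $|a_x|$, which is where the independence of $X$ in $G'$ is used, and that $a_z$ is positive, so $|a_z|=a_z$ and the sign of $z$ is $+1$, i.e.\ $z\in X_1$.
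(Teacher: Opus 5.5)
Your proposal is correct and follows essentially the same route as the paper: both apply Lemma~\ref{lem:signs}(3) to the $a_x$ with $N=y$, take the resulting signs for $\mathcal X^{(1)}$, and reverse every vertex except $z$ to obtain $\mathcal X^{(2)}$. Your explicit checks that $|a_x|\le y$ (using the independence of $X$ in $G'$) and that $\theta$ coincides with the minimum in Lemma~\ref{lem:signs} are precisely the hypotheses the paper leaves implicit.
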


\begin{proof}
	Since $z\in Z^-$, we have $a_z\ge\theta>0$.
	By Lemma~\ref{lem:signs}(3), there exist signs
	$\sigma_x\in\{-1,1\}$, for each $x\in X\setminus\{z\}$,
	such that, writing
	\[
	r=\sum_{x\in X\setminus\{z\}}\sigma_xa_x,
	\]
	we have
	\[
	a_z+r\ge\theta
	\qquad\text{and}\qquad
	a_z-r\ge\theta.
	\]
	Place $z$ in $X_1^{(1)}$. For each $x\in X\setminus\{z\}$,
	place $x$ in $X_1^{(1)}$ if $\sigma_x=1$ and in $X_2^{(1)}$
	if $\sigma_x=-1$. Obtain $\mathcal X^{(2)}$ by keeping $z$
	in the first part and reversing the placement of every other vertex.
	Then we have
	\[
	D_{\mathcal X^{(1)}}=a_z+r\ge\theta
	\qquad\text{and}\qquad
	D_{\mathcal X^{(2)}}=a_z-r\ge\theta.
	\]
\end{proof}
Suppose a set $Z\subseteq X$ satisfies $|Z|=3$, and let $z\in Z$ be a vertex
whose indegree in $G'$ is minimal among the vertices of $Z$. Let $\mathcal X^{(1)}$
and $\mathcal X^{(2)}$ be two ordered partitions of $X$
which place $z$ in the first part and reverse the placement
of every other vertex.
Define, for $p\in[0,1/2]$, 
\[
\mu(p)=
\frac{I_{AB}(\mathcal X^{(1)},p)
	+I_{AB}(\mathcal X^{(2)},p)}2.
\]
Averaging \eqref{eq:ideals} over the two partitions gives
\begin{equation}\label{eq:mean-parameter}
		\mu(p)
		=\frac p2\left(\alpha+d_{G'}^+(z)\right)
		+\frac{1-p}{2}\left(\beta-d_{G'}^-(z)\right)
		+p(1-p)\gamma+p^2L.
\end{equation}
\begin{lemma}\label{lem:average}
	For every choice of $Z$, $z$, $\mathcal X^{(1)}$ and
	$\mathcal X^{(2)}$ satisfying the above conditions, and $p=(d-1)/(2d-1)$, we have
	\[
	\mu(p)\ge c_dm'.
	\]
\end{lemma}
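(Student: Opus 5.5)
The plan is to reduce the claim to a linear inequality in $\beta,\gamma,L,d_{G'}^{\pm}(z)$. That inequality will then be proved by charging the required amount to the components of $G'[Y]$. Write $p=(d-1)/(2d-1)$, so that $1-p=d/(2d-1)$, $1-2p=1/(2d-1)$, $c_d=p/2$ and $p(1-p)-p/2=p/(2(2d-1))$. Substitute \eqref{eq:mean-parameter} and $m'=\alpha+\beta+\gamma$. The $\alpha$ terms cancel exactly. After multiplying by $2(2d-1)$, the claim $\mu(p)\ge c_dm'$ becomes
\[
\beta+p\gamma+2(2d-1)p^2L+(d-1)\,d_{G'}^+(z)\ \ge\ d\cdot d_{G'}^-(z).
\]
Since $d_{G'}^+(z)\ge0$, I would first try to prove this inequality without the $d_{G'}^+(z)$ term. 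I would keep that term in reserve in case the component estimate below turns out to be too tight.

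Next I would use the choice of $z$. Because $X$ is independent in $G'$, every edge into a vertex of $Z$ comes from $Y$. Since $z$ has minimum indegree in $Z$ and $|Z|=3$,
\[
d_{G'}^-(z)\le \frac13\sum_{x\in Z}d_{G'}^-(x)=\frac13\,e(Y,Z)=\frac13\sum_{C}e(C,Z),
\]
where the last sum runs over the components $C$ of $G'[Y]$. The quantities $\beta=\sum_C e(C,X)$, $\gamma=\sum_C e_{G'}(C)$ and $2L=\sum_C(|C|-[C\text{ simple}])$ also split over components. Hence it suffices to prove, for each component $C$ with $c=|C|$, that
\[
e(C,X)+p\,e(C)+(2d-1)p^2\bigl(c-[C\text{ simple}]\bigr)\ \ge\ \frac d3\,e(C,Z).
\]

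To prove the per-component inequality I would use the two constraints available. First, $e(C,X)\ge e(C,Z)$, and $e(C,Z)\le 3c$. Second, by the minimum outdegree, $e(C,X)+e(C)\ge dc$; moreover, each vertex has at most $c-1$ out-neighbours inside $C$, so $e(C,X)\ge c\max(0,d-c+1)$. The left-hand side is increasing in $e(C,X)$ and in $e(C)$, so the worst case is $e(C)=dc-e(C,X)$ with $e(C,X)$ as small as the constraints allow. What remains is then a linear function of $e(C,Z)$, which I would check at its extremes, chiefly $e(C,Z)=3c$. For large $c$, the check at $e(C,Z)=3c$ with the bound $e(C)\ge dc-e(C,X)$ gives a slack of order $c(d+1)-(d-1)^2$ after clearing denominators, which is positive. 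A sanity check is the extremal configuration: tournaments of order $2d-1$, where $e(C,Z)=c$ and $e(C)=c(d-1)$. There the inequality should hold with equality or near-equality.

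The main obstacle is the small components, $c$ up to about $d$. For these the bound $e(C)\ge dc-e(C,X)$ is replaced by the trivial bound $e(C)\le c(c-1)$, and the $L$ term loses its $-1$ for simple components. There I would use $e(C,X)\ge c(d-c+1)$: edges to $X\setminus Z$ count in $\beta$ but not on the right-hand side. If a small residual deficit remains, in particular for an isolated vertex or a single edge with all three edges going into $Z$ and $d$ large, I would add back the reserved term $(d-1)d_{G'}^+(z)$. I would also re-examine the averaging over $Z$ more carefully, possibly by choosing $z$ relative to the actual distribution of $e(C,Z)$, so that the global inequality closes.
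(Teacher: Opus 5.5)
Your route is correct and differs from the paper's, but you stop short of the one check that matters. That check does close, and the deficits you anticipate do not occur. Your reduction to
$\beta+p\gamma+2(2d-1)p^2L+(d-1)d_{G'}^+(z)\ge d\,d_{G'}^-(z)$ is right. So is the passage, via $d_{G'}^-(z)\le e(Y,Z)/3$, to the per-component inequality, which holds for every component.

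Write $f=e(C,X)$, $g=e(C,Z)$, $e=e(C)$, $s=[C\text{ simple}]$ and $\Phi=f+pe+\frac{(d-1)^2}{2d-1}(c-s)-\frac{d}{3}g$.
\begin{itemize}
\item For $d\le3$, simply $\frac{d}{3}g\le g\le f$.
\item For $d\ge4$, fix $g$ and minimize $f+pe$ subject to $f\ge g$, $f+e\ge dc$ and $0\le e\le c(c-1)$. The result, as a function of $g$, has slopes $-\frac{d}{3}$ and $\frac{d}{2d-1}-\frac{d}{3}\le0$, so $g=3c$ is indeed the worst case.
\item If $c\ge d-2$, the optimum is $f=3c$, $e=(d-3)c$, giving $(2d-1)\Phi\ge(d+1)c-(d-1)^2s\ge d-3$.
\item If $c\le d-3$, the optimum is $e=c(c-1)$, $f=dc-c(c-1)$, giving $(2d-1)\Phi\ge(c-1)\bigl((d-1)^2-dc\bigr)\ge0$.
\end{itemize}
Equality occurs for an isolated vertex of outdegree $d$ with three out-neighbours in $Z$. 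Your sanity check points at the wrong configuration: the tournament components have slack of order $d^2$ each.

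Neither fallback is needed, which is fortunate. The vertex $z$ is fixed by the statement, so it cannot be re-chosen. The reserved term $(d-1)d_{G'}^+(z)$ can be $0$ in the tight case, so it could not have absorbed a per-component deficit.

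The paper argues globally instead. It rewrites $2(2d-1)^2(\mu(p)-c_dm')$ as the sum \eqref{eq:average} of six manifestly nonnegative terms. These come from four facts:
\begin{itemize}
\item $\beta+\gamma\ge dy$;
\item the minimality of $d_{G'}^-(z)$ on $Z$, equivalent to your $3d_{G'}^-(z)\le e(Y,Z)$;
\item $d_{G'}^-(z)\le y$, the global form of your $e(C,Z)\le 3c$;
\item Lemma~\ref{lem:degree} with $S=Z$, which extracts only one minimum-outdegree vertex from each simple component.
\end{itemize}
You trade Lemma~\ref{lem:degree} for a small optimization inside each component that uses all the outdegree constraints. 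This costs a case analysis, but it shows transparently that the inequality holds component by component and where it is tight. The paper's identity is a shorter certificate. Its individual terms are also reused in Claim~1 of the stability proof, where the sum being $o(m)$ forces $L$, $d_{G'}^+(z)$, $y-d_{G'}^-(z)$ and $\beta+\gamma-dy$ to be $o(m)$ separately. Your per-component slack would need to be reorganized to serve that purpose.
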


\begin{proof}
Substituting $p=(d-1)/(2d-1)$ into \eqref{eq:mean-parameter}, and using notational shorthands $d^-=d_{G'}^-(z)$ and $d^+=d_{G'}^+(z)$, gives
\begin{equation*}
\begin{aligned}
	\mu(p)
	={}&\frac{d-1}{2(2d-1)}
	\bigl(\alpha+d^+\bigr)+\frac{d}{2(2d-1)}
	\bigl(\beta-d^-\bigr)+\frac{d(d-1)}{(2d-1)^2}\gamma
	+\frac{(d-1)^2}{(2d-1)^2}L.
\end{aligned}
\end{equation*}
Together with $m'=\alpha+\beta+\gamma$, this yields
\begin{equation}\label{eq:average-expanded}
\begin{aligned}
	2(2d-1)^2\bigl(\mu(p)-c_dm'\bigr)
	={}&(2d-1)(d-1)\bigl(\alpha+d^+\bigr)+d(2d-1)\bigl(\beta-d^-\bigr)\\
	&+2d(d-1)\gamma+2(d-1)^2L-(2d-1)(d-1)\bigl(\alpha+\beta+\gamma\bigr)\\
	={}&(2d-1)(d-1)d^+
	+(2d-1)\beta+(d-1)\gamma\\
	&-d(2d-1)d^-+2(d-1)^2L\\
	={}&(2d-1)(d-1)d^+
	+(d-1)\bigl(\beta+\gamma\bigr)\\
	&+d\bigl(\beta-(2d-1)d^-\bigr)
	+2(d-1)^2L.
\end{aligned}
\end{equation}
Since $X$ is independent and $|Z|=3$, we have
\begin{equation}\label{eq:indegree-decomposition}
\begin{aligned}
	\beta-(2d-1)d^-
	&=\sum_{x\in Z}d_{G'}^-(x)+e(Y,X\setminus Z)
	-(2d-1)d^-\\
	&=\sum_{x\in Z}\bigl(d_{G'}^-(x)-d^-\bigr)
	+e(Y,X\setminus Z)-2(d-2)d^-.
\end{aligned}
\end{equation}
Substituting \eqref{eq:indegree-decomposition} into
\eqref{eq:average-expanded} while adding and subtracting
$d(d-1)y$ gives
\begin{equation}\label{eq:average-degree-excess}
\begin{aligned}
	2(2d-1)^2\bigl(\mu(p)-c_dm'\bigr)
	={}&(2d-1)(d-1)d^+ + (d-1)\bigl(\beta+\gamma\bigr)+d\sum_{x\in Z}
	\bigl(d_{G'}^-(x)-d^-\bigr)\\
	&
	+d\,e(Y,X\setminus Z)-2d(d-2)d^-+2(d-1)^2L\\
	={}&(2d-1)(d-1)d^+ +(d-1)\bigl(\beta+\gamma-dy\bigr)+d\sum_{x\in Z}
	\bigl(d_{G'}^-(x)-d^-\bigr)\\
	&
	+d\,e(Y,X\setminus Z)+d(d-1)y-2d(d-2)d^-+2(d-1)^2L.
\end{aligned}
\end{equation}
Since $y=2L+\tau$, we have
\begin{equation}\label{eq:average-remainder}
\begin{aligned}
	&d(d-1)y-2d(d-2)d^-+2(d-1)^2L\\
	&\qquad=2d(d-2)\bigl(y-d^-\bigr)
	-d(d-3)y+2(d-1)^2L\\
	&\qquad=2d(d-2)\bigl(y-d^-\bigr)
	-d(d-3)\tau+(2d+2)L\\
	&\qquad=2d(d-2)\bigl(y-d^-\bigr)
	+d\bigl(L-(d-3)\tau\bigr)+(d+2)L.
\end{aligned}
\end{equation}
Substituting \eqref{eq:average-remainder} into
\eqref{eq:average-degree-excess}, we obtain
\begin{equation}\label{eq:average}
	\begin{aligned}
		2(2d-1)^2\bigl(\mu(p)-c_dm'\bigr)
		={}&(2d-1)(d-1)d^+ +(d-1)\bigl(\beta+\gamma-dy\bigr)\\
		&+d\sum_{x\in Z}
		\bigl(d_{G'}^-(x)-d^-\bigr)+d\bigl(e(Y,X\setminus Z)+L-(d-3)\tau\bigr)\\
		&+2d(d-2)\bigl(y-d^-\bigr)+(d+2)L.
	\end{aligned}
\end{equation}
	The second term is nonnegative by \eqref{eq:y-degree},
	the third by the choice of $z$, and the fourth by
	Lemma~\ref{lem:degree} with $S=Z$. The remaining terms
	are nonnegative because $d\ge2$, $d^-\le y$
	and $L\ge0$. This proves the lemma.
\end{proof}

\begin{proof}[Completion of the proof of Theorem~\ref{thm:asympt}]
	If $\theta\le m'/(2d-1)+L$, then Lemma~\ref{lem:fair}
	gives an ordered partition satisfying \eqref{eq:target}
	with $p=1/2$.
	
	Suppose now that $\theta>m'/(2d-1)+L$.
	By \eqref{eq:y-degree} and $L\ge0$, we have
	\[
	\theta>\frac{m'}{2d-1}+L
	\ge\frac{dy}{2d-1}
	>\frac{y}{2}.
	\]
	Lemma~\ref{lem:few} gives $|Z^-|\ge3$.
	Choose $Z\subseteq Z^-$ with $|Z|=3$, and let $z\in Z$
	have minimum indegree in $G'$ among the vertices of $Z$.
	Let $\mathcal X^{(1)}$ and $\mathcal X^{(2)}$ be the
	partitions supplied by Lemma~\ref{lem:paired}, and put
	$p=(d-1)/(2d-1)$. For each $j\in\{1,2\}$, the definitions
	in \eqref{eq:ideals} give
	\[
	\begin{aligned}
		I_{BA}(\mathcal X^{(j)},p)-I_{AB}(\mathcal X^{(j)},p)
		&=\frac{D_{\mathcal X^{(j)}}}{2}
		+\frac{\alpha-\beta}{2(2d-1)}\\
		&\ge\frac{\theta}{2}-\frac{m'}{2(2d-1)}\\
		&>\frac L2\ge0.
	\end{aligned}
	\]
	By Lemma~\ref{lem:average}, there is some $j\in\{1,2\}$
	such that $I_{AB}(\mathcal X^{(j)},p)\ge c_dm'$.
	The preceding inequality gives
	$I_{BA}(\mathcal X^{(j)},p)\ge c_dm'$ as well.
	Thus \eqref{eq:target} holds in both cases, and the reduction
	following \eqref{eq:target} proves Theorem~\ref{thm:asympt}.
\end{proof}

\section{Stability analysis}\label{sec:stability}

\begin{proof}[Proof of Theorem~\ref{thm:stability}]
	We follow the definitions and notation of Section~\ref{sec:asympt}.
	Fix $d$ and a sequence satisfying the assumptions of the theorem.
	We suppress the sequence index throughout the proof.
	All asymptotic statements refer to this index tending to infinity.
	
	Apply the construction from Section~\ref{sec:asympt} to each graph $G$.
	Thus
	\[
	X=\{v\in V(G):d_G(v)>m^{4/7}\}
	\qquad\text{and}\qquad
	Y=V(G)\setminus X,
	\]
	and $G'$ is obtained by deleting all edges with both endpoints
	in $X$. We have
	\[
	|X|\le2m^{3/7},\qquad
	m'=|E(G')|=m-O(m^{6/7}),
	\qquad\text{and}\qquad
	\max_{w\in Y}d_{G'}(w)\le m^{4/7}.
	\]
	All edge counts and degrees without a subscript refer to $G'$.
	
	Every vertex partition of $G'$ has a directed cut of size at
	most $c_dm'+o(m)$, with an error independent of the partition.
	Lemma~\ref{lem:completion} therefore gives
	\begin{equation}\label{eq:stability-ideals}
		\min\{I_{AB}(\mathcal X,p),I_{BA}(\mathcal X,p)\}
		\le c_dm'+o(m)
	\end{equation}
	for every ordered partition $\mathcal X$ of $X$ and every
	$p\in[0,1/2]$. The error is uniform in $\mathcal X$ and $p$,
	since the error in Lemma~\ref{lem:completion} is $O(m^{6/7})$.
	
	By~\eqref{eq:fair-optimum}, we have
	\begin{equation}\label{eq:stability-theta}
		\theta\ge\frac{m'}{2d-1}+L-o(m).
	\end{equation}
	The greedy signing argument in Lemma~\ref{lem:signs} gives
	$\theta\le y$. Together with \eqref{eq:y-degree}, this yields
	\[
	\frac{m'}{2d-1}-o(m)\le y\le\frac{m'}d.
	\]
	Consequently, we have $y=\Theta(m)$ and
	\[
	\theta-\frac{y}{2}
	\ge\frac{y}{2(2d-1)}-o(m)>0
	\]
	for all sufficiently large indices.
	
	We shall use the nonnegative quantities
	\[
	\varepsilon=\beta+\gamma-dy
	\qquad\text{and}\qquad
	\eta(S)=e(Y,X\setminus S)+L-(d-|S|)\tau
	\quad(S\subseteq X).
	\]
	Their nonnegativity follows from \eqref{eq:y-degree} and
	Lemma~\ref{lem:degree}. Put
	\[
	g=\sum_{x\in X\setminus(Z^-\cup Z^+)}|a_x|.
	\]
	By Lemma~\ref{lem:signs}, we have $g\le y-\theta$, and hence
	\[
	\alpha-e(Y,X\setminus Z^-)
	\ge |Z^+|\theta-g
	\ge |Z^+|\theta-(y-\theta).
	\]
	Using $y=2L+\tau$, we obtain
	\begin{equation}\label{eq:stability-budget}
		\begin{aligned}
			&(|Z^-|-1)\tau+m'+(2d-1)L-(2d-1)\theta\\
			&\quad=(2d-1)(y-\theta)
			+\alpha-e(Y,X\setminus Z^-)
			+\varepsilon+\eta(Z^-)\\
			&\quad\ge(2d-2)(y-\theta)+|Z^+|\theta
			+\varepsilon+\eta(Z^-);
		\end{aligned}
	\end{equation}
this is inequality \eqref{eq:small-count} with the two nonnegative
degree excesses included.
	
	\medskip
	\noindent\emph{Claim 1. For all sufficiently large indices,
		we have $|Z^-|\le2$.}
	
	Suppose otherwise, and pass to a subsequence on which
	$|Z^-|\ge3$. Choose $Z\subseteq Z^-$ with $|Z|=3$, and let
	$z\in Z$ have minimum indegree among the vertices of $Z$.
	Let
	$\mathcal X^{(j)}=(X_1^{(j)},X_2^{(j)})$, for $j\in\{1,2\}$,
	be the partitions supplied by Lemma~\ref{lem:paired}.
	Put $p_0=(d-1)/(2d-1)$, and consider their average $\mu(p)$.
	
	For each $j\in\{1,2\}$, Lemma~\ref{lem:paired} and
	\eqref{eq:stability-theta} give
	\[
	\begin{aligned}
		I_{BA}(\mathcal X^{(j)},p_0)-I_{AB}(\mathcal X^{(j)},p_0)
		&\ge\frac{\theta}{2}
		+\frac{\alpha-\beta}{2(2d-1)}\\
		&\ge\frac{\alpha}{2d-1}
		+\frac{\gamma}{2(2d-1)}+\frac L2-o(m)\\
		&\ge-o(m).
	\end{aligned}
	\]
	It follows from \eqref{eq:stability-ideals} that
	$I_{AB}(\mathcal X^{(j)},p_0)\le c_dm'+o(m)$.
	On the other hand, Lemma~\ref{lem:average} gives
	$\mu(p_0)\ge c_dm'$. Thus
	\[
	\mu(p_0)=c_dm'+o(m)
	\qquad\text{and}\qquad
	I_{AB}(\mathcal X^{(j)},p_0)=c_dm'+o(m)
	\quad(j=1,2).
	\]
	
	Suppose first that $d=2$. Retaining the fourth and last
	terms on the right-hand side of \eqref{eq:average}, we obtain
	\[
	\begin{aligned}
		18\bigl(\mu(p_0)-c_2m'\bigr)
		&\ge2\bigl(e(Y,X\setminus Z)+L+\tau\bigr)+4L\\
		&\ge2 y.
	\end{aligned}
	\]
	This contradicts $y=\Theta(m)$.
	
	Suppose now that $d\ge3$. Every term on the right-hand side
	of \eqref{eq:average} is nonnegative, and their sum is $o(m)$.
	Consequently, we have
	\[
	L=o(m),\qquad d^+(z)=o(m),\qquad
	d^-(z)=y+o(m),
	\qquad\text{and}\qquad
	\varepsilon=o(m).
	\]
	By the choice of $z$ and Lemma~\ref{lem:degree}, we have
	\[
	\begin{aligned}
		\beta
		&\ge3d^-(z)+e(Y,X\setminus Z)\\
		&\ge3d^-(z)+(d-3)\tau-L\\
		&=dy+o(m).
	\end{aligned}
	\]
	Together with $\varepsilon=o(m)$, this gives
	\[
	\beta=dy+o(m),\qquad
	\gamma=o(m),
	\qquad\text{and}\qquad
	m'=\alpha+dy+o(m).
	\]
	
	We now use
	\[
	p_1=\frac{d-1}{2d}.
	\]
	For each $j\in\{1,2\}$, expanding
	$I_{AB}(\mathcal X^{(j)},p_0)=c_dm'+o(m)$ gives
	\[
	(d-1)e(X_1^{(j)},Y)+d\,e(Y,X_2^{(j)})
	=\frac{d-1}{2}m'+o(m).
	\]
	Substituting this into the definitions of $I_{AB}$ and $I_{BA}$,
	and using $\gamma,L=o(m)$, yields
	\[
	\begin{aligned}
		d\bigl(I_{BA}(\mathcal X^{(j)},p_1)
		-I_{AB}(\mathcal X^{(j)},p_1)\bigr)
		&=\alpha-e(X_1^{(j)},Y)+o(m)\\
		&=e(X_2^{(j)},Y)+o(m)\\
		&\ge-o(m).
	\end{aligned}
	\]
	
	By \eqref{eq:stability-theta} and $\theta\le y$, we have
	$m'\le(2d-1)y+o(m)$. Hence
	\[
	\beta-c_dm'
	\ge\frac{d+1}{2}y+o(m)>0.
	\]
	Applying asymptotic extremality to the partition
	$V(G')=X\sqcupdot Y$, we obtain
	$\alpha\le c_dm'+o(m)$. Therefore
	\[
	\begin{aligned}
		(d-1)y-\alpha
		&=\frac{d-1}{d}m'
		-\frac{2d-1}{d}\alpha+o(m)\\
		&\ge\frac{d-1}{2d}m'+o(m).
	\end{aligned}
	\]
	Equation~\eqref{eq:mean-parameter} gives, uniformly for
	$p\in[0,1/2]$,
	\[
	\mu(p)=\frac p2\alpha
	+\frac{1-p}{2}(d-1)y+o(m).
	\]
	Since $\mu(p_0)=c_dm'+o(m)$, we obtain
	\[
	\begin{aligned}
		\mu(p_1)-c_dm'
		&=\frac{d-1}{4d(2d-1)}
		\bigl((d-1)y-\alpha\bigr)+o(m)\\
		&\ge\frac{(d-1)^2}{8d^2(2d-1)}m'+o(m).
	\end{aligned}
	\]
	Choose $j$ for which
	$I_{AB}(\mathcal X^{(j)},p_1)\ge\mu(p_1)$.
	For this partition,
	$I_{BA}(\mathcal X^{(j)},p_1)\ge
	I_{AB}(\mathcal X^{(j)},p_1)-o(m)$.
	Thus both quantities exceed $c_dm'$ by a positive multiple
	of $m$, contradicting \eqref{eq:stability-ideals}.
	This proves Claim 1.
	
	\medskip
	\noindent\emph{Claim 2. For all sufficiently large indices,
		there is a vertex $z$ such that $Z^-=\{z\}$ and $Z^+=\emptyset$.
		Moreover, we have
		$\theta=y+o(m)$ and $\varepsilon=\eta(\{z\})=o(m)$.}
	
	By Claim 1 and Lemma~\ref{lem:few}, we have
	$\theta\le m'/(2d-1)+L$. Together with
	\eqref{eq:stability-theta}, this gives
	\begin{equation}\label{eq:stability-tight}
		m'+(2d-1)L-(2d-1)\theta=o(m).
	\end{equation}
	Lemma~\ref{lem:signs} gives
	$|Z^-|+|Z^+|\notin\{0,2\}$.
	
	If $Z^-=\emptyset$, then $|Z^+|\ge1$.
	The left-hand side of \eqref{eq:stability-budget} is at most
	$o(m)$, whereas its right-hand side is at least
	$\theta=\Omega(m)$, a contradiction.
	
	Suppose that $|Z^-|=2$ along an infinite subsequence.
	Then $|Z^+|\ge1$. Substituting $\tau=y-2L$ into
	\eqref{eq:stability-budget} and using
	\eqref{eq:stability-tight}, we obtain
	\[
	2L+(2d-3)(y-\theta)+(|Z^+|-1)\theta
	+\varepsilon+\eta(Z^-)\le o(m).
	\]
	All terms on the left-hand side are nonnegative. Hence
	\[
	L=o(m),\qquad y-\theta=o(m),\qquad
	\varepsilon=\eta(Z^-)=o(m),
	\]
	and $|Z^+|=1$ for all sufficiently large indices on this
	subsequence. Each vertex of $Z^-$ has indegree $y+o(m)$.
	By the definition of $\eta(Z^-)$, we have
	\[
	\begin{aligned}
		\beta
		&=\sum_{x\in Z^-}d^-(x)+e(Y,X\setminus Z^-)\\
		&=2y+(d-2)\tau-L+o(m)\\
		&=dy+o(m).
	\end{aligned}
	\]
	Since $\varepsilon=o(m)$, we also have $\gamma=o(m)$.
	The two vertices of $Z^-$ have $a_x=y+o(m)$, the vertex
	of $Z^+$ has $a_x=-y+o(m)$, and the sum of the magnitudes
	of the remaining $a_x$ is at most $y-\theta=o(m)$.
	Consequently, we have
	\[
	\beta-\alpha=\sum_{x\in X}a_x=y+o(m).
	\]
	Thus,
	\[
	\alpha=(d-1)y+o(m)
	\qquad\text{and}\qquad
	m'=(2d-1)y+o(m).
	\]
	Both directed cuts of $X\sqcupdot Y$ have size at least
	$(d-1)y+o(m)$, whereas
	$c_dm'=(d-1)y/2+o(m)$.
	This contradicts asymptotic extremality.
	
	Hence $Z^-=\{z\}$ for all sufficiently large indices.
	Now \eqref{eq:stability-budget} and
	\eqref{eq:stability-tight} give
	\[
	(2d-2)(y-\theta)+|Z^+|\theta
	+\varepsilon+\eta(\{z\})\le o(m).
	\]
	The conclusions of Claim 2 follow.
	
	Since $a_z\ge\theta$ and $d^-(z)\le y$, we have
	\[
	d^+(z)=o(m)
	\qquad\text{and}\qquad
	d^-(z)=y+o(m).
	\]
	Lemma~\ref{lem:signs} also gives
	$\sum_{x\in X\setminus\{z\}}|a_x|=o(m)$.
	Summing the degree differences over $X\setminus\{z\}$,
	we obtain
	\begin{equation}\label{eq:stability-balance}
		\alpha=e(Y,X\setminus\{z\})+o(m).
	\end{equation}
	
	\medskip
	\noindent\emph{Claim 3. All but $o(m)$ vertices of $Y$
		belong to singleton components or regular tournament
		components of order $2d-1$ in $G'[Y]$. Each of these vertices
		sends an edge to $z$ and has outdegree exactly $d$ in $G'$.}
	
	For each component of $G'[Y]$, write $C$ for its vertex set
	and put
	\[
	\rho(C)=
	\begin{cases}
		\displaystyle e(C,X\setminus\{z\})
		+\frac{|C|-(2d-1)}2,
		& G'[C]\text{ is simple},\\[6pt]
		\displaystyle e(C,X\setminus\{z\})+\frac{|C|}{2},
		& G'[C]\text{ is not simple}.
	\end{cases}
	\]
	By Claim 2, we have
	\begin{equation}\label{eq:stability-components}
		\sum_C\rho(C)=\eta(\{z\})=o(m).
	\end{equation}
	For a simple component, the minimum-outdegree assumption gives
	\[
	\begin{aligned}
		d|C|
		&\le e(C)+e(C,\{z\})+e(C,X\setminus\{z\})\\
		&\le\frac{|C|(|C|-1)}2+|C|
		+e(C,X\setminus\{z\}).
	\end{aligned}
	\]
	Consequently, we have
	\[
	\rho(C)\ge
	\begin{cases}
		\displaystyle\frac{(|C|-1)(2d-1-|C|)}2,
		& |C|\le2d-1,\\[6pt]
		\displaystyle\frac{|C|-(2d-1)}2,
		& |C|>2d-1.
	\end{cases}
	\]
	Every $\rho(C)$ is therefore nonnegative, and it can vanish
	only for a simple component of order $1$ or $2d-1$.
	
	If $|C|=1$ and $\rho(C)=0$, then
	$e(C,X\setminus\{z\})=d-1$.
	The minimum-outdegree assumption forces the vertex to send
	an edge to $z$, so its outdegree is exactly $d$.
	If $|C|=2d-1$ and $\rho(C)=0$, then
	$e(C,X\setminus\{z\})=0$, and equality holds throughout
	the displayed degree estimate.
	Thus $G'[C]$ is a tournament, every vertex sends an edge
	to $z$, and every vertex has outdegree exactly $d$ in $G'$.
	Its internal outdegree is therefore $d-1$, so the tournament
	is regular.
	
	Every positive $\rho(C)$ is at least $|C|/(4d)$.
	Indeed, for a simple component with $2\le|C|\le2d-2$,
	the displayed bound is at least $|C|/4$.
	For $|C|\ge2d$, it is at least $|C|/(4d)$.
	For $|C|\in\{1,2d-1\}$, a positive value is an integer
	and hence is at least $1$.
	A nonsimple component contributes at least $|C|/2$.
	
	Let $W$ be the union of the components with $\rho(C)>0$.
	Equation~\eqref{eq:stability-components} gives $|W|=o(m)$,
	proving Claim 3.
	Moreover, since every vertex of $Y$ has outdegree at least $d$,
	we have
	\[
	\sum_{w\in W}d^+(w)
	\le d|W|+\sum_{w\in Y}\bigl(d^+(w)-d\bigr)
	=d|W|+\varepsilon=o(m).
	\]
	
	Let $S$ be the set of singleton components with $\rho(C)=0$,
	identifying each such component with its vertex, and let $k$
	be the number of tournament components with $\rho(C)=0$.
	Then
	\[
	y=(2d-1)k+|S|+|W|.
	\]
	
	\medskip
	\noindent\emph{Claim 4. We have $|S|=o(m)$.}
	
	Each vertex of $S$ sends exactly $d-1$ edges to
	$X\setminus\{z\}$, whereas the tournament vertices
	send no edges to this set.
	Since the total outdegree of $W$ is $o(m)$, we have
	\[
	e(Y,X\setminus\{z\})=(d-1)|S|+o(m).
	\]
	By \eqref{eq:stability-balance}, we also have
	$\alpha=(d-1)|S|+o(m)$.
	Using $\varepsilon=o(m)$ and $|W|=o(m)$, we obtain
	\[
	\begin{aligned}
		m'
		&=\alpha+dy+o(m)\\
		&=d(2d-1)k+(2d-1)|S|+o(m).
	\end{aligned}
	\]
	
	Put $z$ and every vertex of $S\cup W$ in $A$, and put
	$X\setminus\{z\}$ in $B$.
	Independently for distinct tournaments, choose a
	uniformly random subset of $d$ vertices and place it in $A$,
	placing the remaining $d-1$ vertices in $B$.
	
	For every choice, each tournament contributes
	$d(d-1)/2$ edges in each direction.
	Indeed, regularity makes its two directed cut sizes equal,
	and their sum is $d(d-1)$.
	All edges from $S$ to $X\setminus\{z\}$ go from $A$ to $B$,
	so every outcome satisfies
	\[
	e(A,B)\ge k\frac{d(d-1)}2+(d-1)|S|.
	\]
	An edge from $X\setminus\{z\}$ to a tournament
	contributes to $e(B,A)$ with probability $d/(2d-1)$,
	and an edge from $X\setminus\{z\}$ to $S\cup W$
	contributes with probability $1$.
	Since $d^+(z)=o(m)$, we have
	\[
	\begin{aligned}
		\E (e(B,A))
		&\ge k\frac{d(d-1)}2
		+\frac{d}{2d-1}e(X\setminus\{z\},Y)\\
		&=k\frac{d(d-1)}2
		+\frac{d(d-1)}{2d-1}|S|+o(m).
	\end{aligned}
	\]
	Choose an outcome for which $e(B,A)$ is at least its
	expectation. The bound on $e(A,B)$ holds for this outcome
	as well. Since
	\[
	c_dm'=k\frac{d(d-1)}2+\frac{d-1}{2}|S|+o(m),
	\]
	we obtain
	\[
	\min\{e(A,B),e(B,A)\}
	\ge c_dm'+\frac{d-1}{2(2d-1)}|S|+o(m).
	\]
	Asymptotic extremality implies $|S|=o(m)$, proving Claim 4.
	
	It follows that
	\[
	\alpha=o(m)
	\qquad\text{and}\qquad
	e(Y,X\setminus\{z\})=o(m).
	\]
	The total outdegree of $S\cup W$ is also $o(m)$.
	Delete from $G'$ every edge whose tail lies in $X$,
	every edge from $Y$ to $X\setminus\{z\}$, and every edge
	whose tail lies in $S\cup W$.
	The number of deleted edges is $o(m)$.
	The resulting spanning subdigraph consists precisely of
	the regular tournaments of order $2d-1$,
	all edges from their vertices to $z$, and isolated vertices.
	The number of isolated vertices is
	\[
	|X|-1+|S|+|W|=o(m).
	\]
	Including the $O(m^{6/7})$ edges initially deleted inside $X$,
	the total number of edges deleted from $G$ is $o(m)$.
	
	Restoring the sequence index gives the required subdigraphs
	$H_i$ for all sufficiently large $i$.
	For the finitely many remaining indices, take an edgeless
	spanning subdigraph.
\end{proof}

\section{The exact results}\label{sec:exact}

\begin{lemma}\label{lem:rooted-cut}
	Let $G$ be a digraph with $m$ edges and minimum outdegree at least
	$d\ge2$, and let $z\in V(G)$. Then there is a partition
	$V(G)=A\sqcupdot B$ with $z\in A$ such that
	\[
	e_G(A,B)\ge c_dm+\frac{d\,d_G^+(z)}{2(2d-1)}.
	\]
\end{lemma}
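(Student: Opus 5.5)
The plan is to run the random star assignment from the proof of Lemma~\ref{lem:completion} with $X=\{z\}$, $Y=V(G)\setminus\{z\}$ and $p=1/2$. Because only the single cut $e_G(A,B)$ has to be controlled, we need no variance estimate: it suffices to take an outcome whose value is at least the expectation. No stars need to be discarded either, so $M=L$, and the bound is exact.

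\emph{Setting up.} Here $X=\{z\}$ is trivially independent, and $Y\ne\emptyset$ because $d^+(z)\ge d$. Take the star partition of $G[Y]$ from Corollary~\ref{cor:stars}, with all stars kept. Fix $z\in A$, i.e.\ $X_1=\{z\}$ and $X_2=\emptyset$, and assign the parts independently as in the table with $p=1/2$. By the computation in the proof of Lemma~\ref{lem:completion} with $M=L$,
\[
\E\, e_G(A,B)=\tfrac12 d^+(z)+\tfrac14\gamma+\tfrac14 L,
\]
where $\gamma=e_G(Y)$ and $L=L(G[Y])$. Edges from $Y$ to $z$ never contribute, since $z\in A$. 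Some outcome attains at least this expectation, and $m=d^+(z)+d^-(z)+\gamma$.

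\emph{The target inequality.} We compare $\E\, e_G(A,B)$ with $c_dm+d\,d^+(z)/(2(2d-1))$. The coefficient of $d^+(z)$ in the difference is
\[
\tfrac12-\tfrac{d-1}{2(2d-1)}-\tfrac{d}{2(2d-1)}=0 .
\]
The coefficient of $\gamma$ is $\tfrac14-c_d=\tfrac1{4(2d-1)}$. Clearing denominators, it therefore suffices to prove
\[
\gamma+(2d-1)L\ \ge\ 2(d-1)\,d^-(z).
\]
We prove this componentwise over the components $C$ of $G[Y]$, using $d^-(z)=\sum_C e(C,\{z\})$, $\gamma=\sum_C e(C)$ and $L=\sum_C L_C$. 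Here $L_C=(|C|-1)/2$ if $C$ is simple and $L_C=|C|/2$ otherwise.

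\emph{The componentwise bound.} Each vertex of $C$ sends at least $d$ edges, all of them into $C\cup\{z\}$. Hence $e(C)\ge d|C|-e(C,\{z\})$, and so
\[
e(C)+(2d-1)L_C-2(d-1)e(C,\{z\})\ \ge\ d|C|-(2d-1)e(C,\{z\})+(2d-1)L_C\ \ge\ (2d-1)L_C-(d-1)|C|,
\]
using $e(C,\{z\})\le|C|$. It remains to show $(2d-1)L_C\ge(d-1)|C|$.
\begin{itemize}
\item If $C$ is not simple, then $L_C=|C|/2$ and the inequality is immediate.
\item If $C$ is simple, then every vertex has at least $d-1$ out-neighbours inside $C$. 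Thus $(d-1)|C|\le e(C)\le |C|(|C|-1)/2$, which forces $|C|\ge 2d-1$. Then $(2d-1)(|C|-1)/2\ge(d-1)|C|$ is equivalent to $|C|\ge 2d-1$.
\end{itemize}
Summing over the components gives the target inequality, and hence the lemma.

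The only step needing care is this componentwise inequality, in particular the simple-component case, where the order bound $|C|\ge2d-1$ is exactly what is required. Everything else is the expectation computation already carried out in Lemma~\ref{lem:completion}.
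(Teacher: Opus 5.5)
Your proof is correct and is essentially the paper's argument: the same $p=1/2$ star assignment with $z\in A$, the same expectation, and the same ingredients (simple components of $G-z$ have at least $2d-1$ vertices, the out-degree count into $C\cup\{z\}$, and $e(C,\{z\})\le|C|$). The paper applies these globally via $L(G-z)\ge\frac{d-1}{2d-1}|V(G-z)|$, whereas you apply them component by component; one small correction is that keeping all stars gives $M=\sum_P e(P)\ge L$ rather than $M=L$, so the expectation is at least (not equal to) $\tfrac12 d^+(z)+\tfrac14\gamma+\tfrac14 L$, which is all you need.
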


\begin{proof}
	Put $H=G-z$. Every vertex of $H$ has outdegree at least $d-1$.
	For each simple component $C$ of $H$, we have
	\[
	(d-1)|V(C)|\le |E(C)|
	\le \frac{|V(C)|(|V(C)|-1)}2.
	\]
	Thus every simple component has at least $2d-1$ vertices, and
	\[
	\tau(H)\le\frac{|V(H)|}{2d-1}.
	\]
	By Corollary~\ref{cor:stars}, there is a partition $\mathcal P$
	of $V(H)$ into induced stars and singletons such that
	\[
	\sum_{P\in\mathcal P}e_H(P)
	\ge L(H)
	\ge\frac{d-1}{2d-1}|V(H)|.
	\]
	Put $z$ in $A$. Independently for distinct members of $\mathcal P$,
	assign their vertices as follows. For each star, put its centre in
	$A$ and all its leaves in $B$ with probability $1/2$, and reverse
	these assignments with probability $1/2$. For each singleton,
	put its vertex in either part with probability $1/2$.
	
	Each edge within a star contributes to $e_G(A,B)$ with probability
	$1/2$. Each edge of $H$ joining distinct members of $\mathcal P$
	contributes with probability $1/4$, and each edge leaving $z$
	contributes with probability $1/2$. Hence
	\[
	\begin{aligned}
		\E (e_G(A,B))
		&=\frac{|E(H)|}{4}
		+\frac14\sum_{P\in\mathcal P}e_H(P)
		+\frac{d_G^+(z)}2\\
		&\ge\frac{|E(H)|}{4}
		+\frac{d-1}{4(2d-1)}|V(H)|
		+\frac{d_G^+(z)}2.
	\end{aligned}
	\]
	The minimum-outdegree assumption gives
	\[
	|E(H)|+d_G^-(z)
	=\sum_{v\in V(H)}d_G^+(v)
	\ge d|V(H)|.
	\]
	Using $m=|E(H)|+d_G^+(z)+d_G^-(z)$ and
	$d_G^-(z)\le |V(H)|$, we obtain
	\[
	\begin{aligned}
		\E (e_G(A,B))-c_dm-\frac{d\,d_G^+(z)}{2(2d-1)}
		&\ge
		\frac{|E(H)|+(d-1)|V(H)|-2(d-1)d_G^-(z)}
		{4(2d-1)}\\
		&\ge\frac{|V(H)|-d_G^-(z)}4\\
		&\ge0.
	\end{aligned}
	\]
	Choose an outcome for which $e_G(A,B)$ is at least its expectation.
	This proves the lemma.
\end{proof}

\begin{proof}[Proof of Theorem~\ref{thm:exact}]
	Fix $d\ge2$, and suppose that the assertion is false.
	Then there is a sequence $(G_i)_{i\ge1}$ of digraphs with
	minimum outdegree at least $d$ such that, writing
	$m_i=|E(G_i)|$, we have $m_i\to\infty$ and
	\begin{equation}\label{eq:exact-counterexample}
		\max_{V(G_i)=A\sqcupdot B}
		\min\{e_{G_i}(A,B),e_{G_i}(B,A)\}
		<c_dm_i+\frac{d^2}{2(2d-1)}.
	\end{equation}
	Theorem~\ref{thm:asympt} bounds the same maximum below by
	$c_dm_i-(5/2)m_i^{6/7}$. Consequently, the sequence satisfies
	the assumptions of Theorem~\ref{thm:stability}.
	
	Apply Theorem~\ref{thm:stability}, and suppress the sequence
	index throughout the rest of the proof. All asymptotic
	statements refer to this sequence, with $d$ fixed.
	There is a spanning subdigraph $H\subseteq G$ consisting of
	vertex-disjoint regular tournaments of order $2d-1$, a vertex
	$z$ receiving an edge from every tournament vertex, and isolated
	vertices, such that
	\[
	|E(G)\setminus E(H)|=o(m).
	\]
	Put $D=E(G)\setminus E(H)$. Mark each tournament that contains
	an endpoint of an edge in $D$. At most $2|D|$ tournaments are
	marked. Let $R$ be the subdigraph of $G$ induced by $z$, all
	isolated vertices of $H$, and all vertices of the marked
	tournaments.
	
	Every edge of $D$ has both endpoints in $R$. The edges of
	$H[R]$ consist of the edges within the marked tournaments
	and their edges to $z$. Each such tournament contributes
	$d(2d-1)$ edges. We therefore have
	\[
	|E(R)|
	\le \bigl(1+2d(2d-1)\bigr)|D|
	=o(m).
	\]
	Every vertex of $R$ retains all its outgoing edges from $G$.
	Indeed, an edge from $V(R)$ to an unmarked tournament would
	be absent from $H$, and hence would belong to $D$, contrary
	to that tournament being unmarked. Thus $R$ has minimum
	outdegree at least $d$.
	
	Let $k$ be the number of unmarked tournaments. No edge of $D$
	has an endpoint in any of them. Hence their edges in $G$ are
	precisely their tournament edges and their edges to $z$, and
	\[
	m=|E(R)|+kd(2d-1).
	\]
	It follows that
	\[
	k\sim\frac{m}{d(2d-1)}
	\qquad\text{and}\qquad
	|E(R)|=o(k).
	\]
	Apply Lemma~\ref{lem:rooted-cut} to $R$ and $z$. Since
	$d_R^+(z)\ge d$, we obtain a partition
	$V(R)=A_R\sqcupdot B_R$ with $z\in A_R$ such that
	\[
	e_R(A_R,B_R)
	\ge c_d|E(R)|+\frac{d^2}{2(2d-1)}.
	\]
	Extend this partition to $V(G)=A\sqcupdot B$ by placing
	$d$ vertices of each unmarked tournament in $A$ and its
	remaining $d-1$ vertices in $B$.
	
	For each unmarked tournament, regularity implies that its
	two directed cut sizes are equal. Their sum is $d(d-1)$,
	so the tournament contributes $d(d-1)/2$ edges in each
	direction. Its edges to $z$ contribute a further $d-1$
	edges from $B$ to $A$. Consequently, we have
	\[
	\begin{aligned}
		e_G(A,B)
		&\ge c_d|E(R)|+\frac{d^2}{2(2d-1)}
		+k\frac{d(d-1)}2\\
		&=c_dm+\frac{d^2}{2(2d-1)}.
	\end{aligned}
	\]
	In the other direction, we have
	\[
	\begin{aligned}
		e_G(B,A)
		&\ge k\frac{d(d-1)}2+k(d-1)\\
		&=c_dm-c_d|E(R)|+k(d-1).
	\end{aligned}
	\]
	Since $|E(R)|=o(k)$ and $k\to\infty$, we have
	\[
	k(d-1)-c_d|E(R)|
	\ge\frac{d^2}{2(2d-1)}
	\]
	for all sufficiently large indices. Thus both directed cuts
	have size at least
	$c_dm+d^2/(2(2d-1))$, contradicting
	\eqref{eq:exact-counterexample}.
\end{proof}
\begin{proof}[Proof of Theorem~\ref{thm:2out}]
	Since $d^+(v)\le |V(G)|-1$ and every other vertex has
	outdegree at least $2$, we have
	\[
	m\ge d^+(v)+2(|V(G)|-1)\ge3d^+(v)
	\]
	for every $v\in V(G)$.
	
Suppose first that some vertex $z$ satisfies
$d^-(z)\ge m/3+1$. Then, by Lemma~\ref{lem:rooted-cut}, there is a partition
$V(G)=A_0\sqcupdot B_0$ with $z\in A_0$ and
$e(A_0,B_0)\ge m/6$.
Choose $\lceil m/6\rceil$ edges from $A_0$ to $B_0$,
let $A$ consist of $z$ and their tails, and put
$B=V(G)\setminus A$.
Every chosen edge still goes from $A$ to $B$, since its
head lies in $B_0$ and $A\subseteq A_0$.
Since the collection contains fewer than $m/6+1$ edges,
we have $|A\setminus\{z\}|<m/6+1$.
Consequently, we have
\[
\begin{aligned}
	e(A,B)&\ge\frac m6,\\
	e(B,A)&\ge e(B,\{z\})
	\ge d^-(z)-|A\setminus\{z\}|
	>\frac m6.
\end{aligned}
\]

We may therefore assume that $d^-(v)<m/3+1$
for every $v\in V(G)$. To handle this case, we use a simplified form of the pairing argument
of Ma, Wu, and Zeng~\cite{MWZ}.

Put $b(v)=d^+(v)-d^-(v)$ for each $v\in V(G)$.
Since $d^+(v)\ge2$, we have
\[
b(v)\le d^+(v)\le\frac m3
\qquad\text{and}\qquad
-b(v)<\frac m3+1-2<\frac m3.
\]
Thus $|b(v)|\le m/3$ for every vertex.
	
List the vertices with $b(v)>0$ in non-increasing order of
$b(v)$ and pair consecutive vertices.
Do the same for the vertices with $b(v)<0$, in non-increasing
order of $|b(v)|$.
Independently for each pair, put one vertex in $A$ and
the other in $B$, choosing either assignment with
probability $1/2$.
Assign every unpaired vertex, including those with
$b(v)=0$, independently to either part with probability
$1/2$. Each edge whose endpoints were paired up crosses the partition
with probability $1$, and every other edge crosses with
probability $1/2$. Hence we have
\[
\E\bigl(e(A,B)+e(B,A)\bigr)\ge\frac m2.
\]	
Put
\[
P=\sum_{\substack{v\in A\\b(v)>0}}b(v)
-\sum_{\substack{v\in B\\b(v)>0}}b(v)
\qquad\text{and}\qquad
Q=\sum_{\substack{v\in A\\b(v)<0}}b(v)
-\sum_{\substack{v\in B\\b(v)<0}}b(v).
\]
The random variables $P$ and $Q$ are independent and each has a distribution symmetric about zero.
	
For either (nonempty) list, let $a_1,a_2,\ldots$ be the
values $|b(v)|$ in non-increasing order, and append
a zero if the length of the list is odd.
	The absolute value of the corresponding random variable
	is at most
	\[
	(a_1-a_2)+(a_3-a_4)+\cdots\le a_1\le\frac m3.
	\]
	Thus $|P|,|Q|\le m/3$.
	By independence and symmetry, we have
	\[
	\begin{aligned}
		\E(|P+Q|)
		&=\frac12\E\bigl(|P+Q|+|P-Q|\bigr)\\
		&=\E(\max\{|P|,|Q|\})\\
		&\le\frac m3.
	\end{aligned}
	\]
	Since $\sum_{v\in V(G)}(d^+(v)-d^-(v))=0$, we have
	\[
	e(A,B)-e(B,A)
	=\sum_{v\in A}\bigl(d^+(v)-d^-(v)\bigr)
	=\frac{P+Q}{2}.
	\]
Consequently, we have
\[
\begin{aligned}
	\min\{e(A,B),e(B,A)\}
	&=\frac{e(A,B)+e(B,A)}2
	-\frac{|e(A,B)-e(B,A)|}{2}\\
	&=\frac{e(A,B)+e(B,A)}2-\frac{|P+Q|}{4}.
\end{aligned}
\]
Taking expectations, we obtain
\[
\begin{aligned}
	\E\bigl(\min\{e(A,B),e(B,A)\}\bigr)
	&=\frac12\E\bigl(e(A,B)+e(B,A)\bigr)
	-\frac14\E\bigl(|P+Q|\bigr)\\
	&\ge\frac12\cdot\frac m2-\frac14\cdot\frac m3\\
	&=\frac m6.
\end{aligned}
\]
Some outcome therefore has both directed cuts of size
at least $m/6$.
Their integrality gives the required bound
$\lceil m/6\rceil$.
\end{proof}
\section{Concluding remarks}\label{sec:conclusion}

Let us mention a number of related results that we obtained, but decided not to include in this paper for brevity and presentation reasons. 

Restating Theorem~\ref{thm:stability} in parameter form and using the resulting quantitative bounds in the proof of Theorem~\ref{thm:exact} gives a sufficient lower bound of $m(d)=(10d)^{50}$ in Theorem~\ref{thm:exact}. Furthermore, in this range it is possible, following the proof and using a standard derandomization technique, to devise a deterministic polynomial algorithm for constructing a partition claimed in Theorem~\ref{thm:exact}. The proof of Theorem~\ref{thm:2out} can also be converted into a polynomial algorithm.

Stating Theorem~\ref{thm:exact} only for sufficiently large $m$ appears necessary, as it fails by $1$ for $d=2$ and $m=9$ (note that it does not contradict Theorem~\ref{thm:2out}). The counterexample consists of a bidirected triangle and another vertex sending edges to all three triangle vertices. On the other hand, it seems plausible that $c_dm$ can be achieved for all $m$. We make it explicit in the following conjecture that constitutes the remaining open part of Scott's question (answered for large $m$ and for $d=2$ in Theorems~\ref{thm:exact} and~\ref{thm:2out}, respectively). It was already asked by Hou, Ma, Yu and Zhang~\cite{HMYZ} as a question, with emphasis on $d=2,3$. 

\begin{conjecture}
Let $d\geq 2$ be an integer, and let $G$ be a digraph with $m$ edges and minimum outdegree at least $d$. Then there is a partition $V(G)=A\sqcupdot B$ such that
\[
e_G(A,B)\ge\left\lceil\frac {d-1}{2(2d-1)}m\right\rceil
\qquad\text{and}\qquad
e_G(B,A)\ge\left\lceil\frac {d-1}{2(2d-1)}m\right\rceil.
\]
\end{conjecture}	 

Furthermore, we have obtained a stability result for extremal configurations in Theorem~\ref{thm:exact}. Any sufficiently large graph $G=(V,E)$ attaining the bound (rounded up) has a vertex subset $W\subseteq V(G)$ such that $G[V\setminus W]$ is a disjoint union of regular $(2d-1)$-vertex tournaments. A special vertex $z\in W$ receives an edge from every vertex in $V\setminus W$ and there are no further edges between $W$ and $V\setminus W$. The `core' digraph $C=G[W]$ itself has minimum out-degree at least $d$, and  
$$\max_{\substack{V(C)=A\sqcupdot B\\ z\in A}} e_C(A,B)=\left\lceil
\frac{(d-1)|E(C)|+d^2}{2(2d-1)}
\right\rceil.
$$
Note that the last condition is not merely a restatement of extremality, but concerns a one-sided cut containing a fixed vertex constrained to lie in $A$. The set $W$ can be chosen with fewer than $32d$ vertices. It must have at least $\max\{d+1,2d-5\}$ vertices, and we have constructed admissible cores of orders $2d$ and $14d-8$.

For every fixed $d\ge2$, the rounded bound in
Theorem~\ref{thm:exact} is sharp for infinitely many $m$, and can
be improved by $1$ for infinitely many other $m$. We do not know
whether, for some fixed $d$, it can be improved by $2$ for
infinitely many $m$. On the other hand, for every fixed $d$ and
all sufficiently large $m$, there is a construction whose
value exceeds the bound by at most
$\lceil(d+3)/4\rceil$.


\begin{thebibliography}{11}
\bibitem{EGK}
Paul Erd\H{o}s, Andr\'as Gy\'arf\'as, and Yoshiharu Kohayakawa,
\emph{The size of the largest bipartite subgraphs},
Discrete Math. \textbf{177} (1997), 267--271.

\bibitem{HLW}
Jianfeng Hou, Zhe Li, and Shufei Wu,
\emph{On bipartitions of directed graphs with small semidegree},
European J. Combin. \textbf{84} (2020), 103039.

\bibitem{HMYZ}
Jianfeng Hou, Huawen Ma, Xingxing Yu, and Xia Zhang,
\emph{A bound on judicious bipartitions of directed graphs},
Sci. China Math. \textbf{63} (2020), no.~2, 297--308.

\bibitem{HW}
Jianfeng Hou and Shufei Wu,
\emph{Bipartitions of oriented graphs},
J. Combin. Theory Ser. B \textbf{132} (2018), 107--133.

\bibitem{HWY}
Jianfeng Hou, Shufei Wu, and Guiying Yan,
\emph{On bisections of directed graphs},
European J. Combin. \textbf{63} (2017), 44--58.

\bibitem{LLS}
Choongbum Lee, Po-Shen Loh, and Benny Sudakov,
\emph{Judicious partitions of directed graphs},
Random Structures Algorithms \textbf{48} (2016), no.~1, 147--170;
\href{https://arxiv.org/abs/1305.2986}{arXiv:1305.2986}.

\bibitem{LMZ}
Guanwu Liu, Jie Ma, and Chunlei Zu,
\emph{Optimal bisections of directed graphs},
Random Structures Algorithms \textbf{64} (2024), no.~1, 138--153;
\href{https://arxiv.org/abs/2302.04050}{arXiv:2302.04050}.

\bibitem{LY}
Guanwu Liu and Xingxing Yu,
\emph{Partitioning digraphs with outdegree at least 4},
J. Graph Theory \textbf{98} (2021), no.~4, 604--622;
\href{https://arxiv.org/abs/2006.01116}{arXiv:2006.01116}.

\bibitem{MWZ}
Zhaoyang Ma, Shufei Wu, and Qinghou Zeng,
\emph{Sharp bisection bounds for digraphs},
preprint (2026),
\href{https://arxiv.org/abs/2608.24095}{arXiv:2608.24095}.

\bibitem{Scott}
Alex Scott,
\emph{Judicious partitions and related problems},
Surveys in Combinatorics 2005 (Bridget S. Webb, ed.),
London Math. Soc. Lecture Note Ser., vol.~327,
Cambridge University Press, Cambridge, 2005, pp.~95--117.

\bibitem{WH}
Shufei Wu and Jianfeng Hou,
\emph{On judicious bipartitions of directed graphs},
Discrete Math. \textbf{345} (2022), no.~10, 112988.
\end{thebibliography}
\end{document}